\theoremstyle{plain} 
    \newtheorem{theorem}{Theorem}[section]
    \newtheorem*{theorem*}{Theorem}
    \newtheorem{corollary}[theorem]{Corollary}
    \newtheorem{lemma}[theorem]{Lemma}
    \newtheorem{proposition}[theorem]{Proposition}
\theoremstyle{definition}
    \newtheorem{definition}[theorem]{Definition}
\theoremstyle{remark}    
    \newtheorem{claim}{Claim}[theorem]
\newcommand{\range}{\operatorname{range}}
\newcommand{\forces}{\Vdash}
\newcommand{\satisfies}{\models}
\newcommand{\domain}{\operatorname{dom}}
\newcommand{\rank}{\operatorname{rank}}
\newcommand{\rk}{\mathrm{rk}}
\newcommand{\ZFC}{\mathrm{ZFC}}
\newcommand{\ZF}{\mathrm{ZF}}
\newcommand{\OR}{\mathrm{OR}}
\newcommand{\id}{\operatorname{id}}
\newcommand{\AC}{\mathrm{AC}}
\newcommand{\SRR}{\mathrm{SRR}}
\newcommand{\RR}{\mathrm{RR}}
\newcommand{\GCH}{\mathrm{GCH}}
\newcommand{\LST}{\mathrm{LST}}
\newcommand{\FORM}{\mathrm{FORM}}
\newcommand{\length}{\operatorname{length}}
\newcommand{\relsubbar}[1]{\mathrel{\underline{#1}}}
\newcommand{\subbar}[1]{\underline{#1}}
\setlist[enumerate]{label={\upshape(\roman*)},noitemsep}
\newcolumntype{C}{>{\centering\arraybackslash$}X<{$}}
\newcolumntype{M}{>{\centering\arraybackslash$}p{.35cm}<{$}}
\DeclareFontFamily{OMX}{MnSymbolE}{}
\DeclareSymbolFont{MnLargeSymbols}{OMX}{MnSymbolE}{m}{n}
\DeclareFontShape{OMX}{MnSymbolE}{m}{n}{
    <-6>  MnSymbolE5
   <6-7>  MnSymbolE6
   <7-8>  MnSymbolE7
   <8-9>  MnSymbolE8
   <9-10> MnSymbolE9
  <10-12> MnSymbolE10
  <12->   MnSymbolE12
}{}
\DeclareFontShape{OMX}{MnSymbolE}{b}{n}{
    <-6>  MnSymbolE-Bold5
   <6-7>  MnSymbolE-Bold6
   <7-8>  MnSymbolE-Bold7
   <8-9>  MnSymbolE-Bold8
   <9-10> MnSymbolE-Bold9
  <10-12> MnSymbolE-Bold10
  <12->   MnSymbolE-Bold12
}{}
\DeclareMathDelimiter{\ulcorner}
    {\mathopen}{MnLargeSymbols}{'036}{MnLargeSymbols}{'036}
\DeclareMathDelimiter{\urcorner}
    {\mathclose}{MnLargeSymbols}{'043}{MnLargeSymbols}{'043}
\title{Strong Rigidity and Elementary Embeddings}
\author{Marwan Salam Mohammd
\\
    \texttt{marwan.mizuri@gmail.com}
}
\begin{document}

\maketitle

\begin{abstract}
    We present a method for producing elementary embeddings from homomorphisms. This method is utilized in the study of the ``strongly rigid relation principle'' as defined by Hamkins and Palumbo in their paper ``The Rigid Relation Principle, a New Weak Choice Principle.'' We establish that the strongly rigid relation principle is also a weak choice principle that is independent of ZF. Finally, we characterize proto Berkeley cardinals in terms of a strong failure of the strongly rigid relation principle.
\end{abstract}



\section{Introduction}

In this paper, we will be dealing with structures endorsed with a single binary relation. Such structures are intuitively easier to visualize as directed graphs. A \emph{directed graph} $(G,E)$ is an ordered pair consisting of a set $G$ of \emph{vertices} and a set $E\subset G\times G$ of \emph{arrows}. For the sake of being concise, we will drop the word ``directed'' in this definition. This should cause no confusion as we will be dealing only with directed graphs. When talking about some graph $(G,E),$ we might use the binary relation notation to indicate arrows, that is, we might write $x \mathrel{E} y$ to indicate $(x,y)\in E.$ 

Recall that a map $h:G_1\rightarrow G_2$ is called a \emph{homomorphism} from the graph $(G_1,E_1)$ to the graph $(G_2,E_2)$ iff $(x,y)\in E_1 \implies (h(x),h(y))\in E_2.$ The fact that $h$ is a homomorphism from $(G_1,E_1)$ to $(G_2,E_2)$ will be indicated by writing $h:(G_1,E_1) \rightarrow (G_2,E_2).$ An \emph{isomorphism} is a bijective map $h:G_1\rightarrow G_2$ such that both $h$ and $h^{-1}$ are homomorphisms. A homomorphism from a graph to itself is called an \emph{endomorphism} and an isomorphism from a graph to itself is called an \emph{automorphism}.

\begin{definition}
    A binary relation $E$ on a set $G$ is said to be \emph{rigid} iff the graph $(G,E)$ has no nontrivial automorphisms. If, furthermore, $(G,E)$ has no nontrivial endomorphisms, then $E$ is said to be \emph{strongly rigid.} If $E$ is a (strongly) rigid relation on $E,$ then we say that the graph $(G,E)$ is \emph{(strongly) rigid}.
\end{definition}

\begin{definition}[\cite{hamkins}]
    The \emph{rigid relation principle}, $\RR,$ asserts that every set $G$ has a rigid relation $E.$ The \emph{strongly rigid relation principle}, $\SRR,$ asserts that every set $G$ has a strongly rigid relation $E.$
\end{definition}

$\SRR$ implies $\RR$ by definition. \citeauthor{vopenka} \cite{vopenka} prove, using the Axiom of Choice $(\AC),$ that every set has a strongly rigid relation.\footnote{In graph theory literature, what we call ``strongly rigid'' (following \cite{hamkins}) is called just ``rigid.''} Thus, $\AC$ implies $\SRR,$ and hence also $\RR.$ Joel David Hamkins and Justin Palumbo \cite{hamkins} prove that $\RR$ does not imply $\AC$ and also establish the independence of $\RR$ from $\ZF.$ We therefore have the following theorem:

\begin{theorem}[\cite{vopenka} and \cite{hamkins}]\label{thm:rr_principle}
    $\RR$ is independent from $\ZF,$ follows from $\AC,$ but is not equivalent to $\AC.$
\end{theorem}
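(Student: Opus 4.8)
The statement packages three facts---$\AC\Rightarrow\RR$, $\con{\ZF}\Rightarrow\con{\ZF+\neg\RR}$, and $\con{\ZF}\Rightarrow\con{\ZF+\RR+\neg\AC}$---and the plan is to assemble them, the first from \cite{vopenka} and the other two from \cite{hamkins}. The first is already recorded above: \citeauthor{vopenka} \cite{vopenka} show, in $\ZFC$, that every set carries a strongly rigid relation, and $\SRR\Rightarrow\RR$ trivially. (For $\AC\Rightarrow\RR$ directly one can well-order $G$ as $\langle g_\alpha:\alpha<\kappa\rangle$ and build $E$ on $G$ by recursion so that the $E$-neighbourhood of $g_\alpha$ encodes $\alpha$; then no vertex can be moved by an automorphism.)

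For the consistency of $\neg\RR$---which together with the first fact gives the independence of $\RR$ from $\ZF$---I would use the basic Cohen symmetric model $\mathcal M$: add $\omega$ Cohen-generic subsets of $\omega$, take the submodel of hereditarily symmetric names under the finitary permutations of $\omega$ (acting by permuting the generics) with finite supports, and let $A$ be the set of the added generics. Any $E\subseteq A\times A$ lying in $\mathcal M$ has a finite support $s\subseteq\omega$, and then the transposition of two indices outside $s$ induces a permutation of $A$ fixing $E$, hence a nontrivial automorphism of $(A,E)$; so $A$ has no rigid relation in $\mathcal M$, i.e.\ $\mathcal M\models\neg\RR$. (The same support-and-transposition argument runs in the basic Fraenkel--Mostowski permutation model with atoms; the Cohen version, or a Jech--Sochor-style transfer, is what makes it a theorem of $\ZF$.)

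For the consistency of $\RR+\neg\AC$---which together with the first fact shows $\RR$ is strictly weaker than $\AC$---the support argument above already shows we cannot use a model whose failure of choice comes from permuting the points of a set; the construction must instead adjoin a rigid structure outright. I would build a symmetric extension $M$ in which the failure of $\AC$ is confined to a single set $A$ carrying a generically added rigid relation $R$---conditions being finite approximations arranged so the density arguments force $(A,R)$ to be rigid rather than the automorphism-rich random graph---with $M$ generated from $A$ and the ordinals. Then $\AC$ fails in $M$ because $A$ is not well-orderable, while $\RR$ holds, by the following propagation: every well-orderable set is rigid via its well-order, $A$ is rigid via $R$, and rigidity is inherited by the operations needed to reach every set of $M$ from $A$ and $\OR$. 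For instance, $\mathcal P(X)$ carries a rigid relation built from $\subseteq$ together with the $E_X$-pattern on singletons (an automorphism must fix $\emptyset$ and $X$, fix every singleton by rigidity of $(X,E_X)$, and hence fix every subset, which is the join of its singletons); disjoint unions and products of rigid graphs likewise carry rigid relations; and one checks that in $M$ every set is in bijection with one built from $A$ and ordinals by these operations.

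The crux is this last part, and in particular reconciling its two demands: $\AC$ must fail, so $A$ cannot be well-ordered, yet $A$ and every other set must still be handed a rigid relation. This pins the forcing down tightly---its generic object must be provably rigid inside the symmetric model, so one must avoid exactly the homogeneity that a naive ``add a generic graph'' forcing produces, while keeping $A$ non-well-orderable---and it requires, alongside the propagation lemma, a structural analysis showing the symmetric model is genuinely exhausted by sets generated from $A$ and $\OR$. This is the substance of \cite{hamkins}, and the step I expect to be hardest.
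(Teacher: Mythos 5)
The paper offers no proof of this theorem: it records $\AC\Rightarrow\SRR\Rightarrow\RR$ from \cite{vopenka} and cites \cite{hamkins} for the independence and non-equivalence, so your proposal must be measured against the arguments those sources (and Sections 3--4 of this paper) actually use. Your three-way decomposition and your treatment of $\AC\Rightarrow\RR$ are fine. The genuine error is in your second step: the basic Cohen symmetric model is the wrong model for $\neg\RR$ --- it is precisely the model in which Hamkins and Palumbo verify $\RR$ together with $\neg\AC$ (and the model reused in Theorem~\ref{thm:neg_ac_plus_srr} of this paper for $\SRR+\neg\AC$). Your support-and-transposition argument breaks in the forcing setting: the symmetry lemma gives $p\forces (\sigma,\tau)\in\dot E \iff \bar\pi(p)\forces(\hat\pi\sigma,\hat\pi\tau)\in\dot E$, but $\bar\pi(p)$ need not lie in the generic filter, so the bijection $a_n\mapsto a_{\pi(n)}$ of the set $A$ of Cohen reals is \emph{not} an automorphism of $(A,E)$ in the extension. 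Concretely, $A$ is a set of reals, and one of Hamkins--Palumbo's positive results (of which Lemma~\ref{lem:reals_case_no_countable_subset} is the strongly rigid analogue) is that every set of reals admits a rigid relation in $\ZF$. Your parenthetical fallback --- run the argument over atoms, where permutations of the atoms genuinely extend to $\in$-automorphisms of the universe, and then transfer the boundable statement ``some set admits no rigid relation'' to a $\ZF$ model --- is the right kind of argument, but it is the whole proof of that step, not an aside.

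For $\RR+\neg\AC$ you have replaced the actual argument with a speculative one. There is no need to force a ``generically rigid'' graph: the basic Cohen model already works, via the structure theorem that every set there injects into $\mathbb{R}^{<\omega}\times\OR$ together with a direct $\ZF$ construction of rigid relations on arbitrary subsets of $\mathbb{R}\times\OR$ (the pattern of Lemma~\ref{lem:reals_case_no_countable_subset} and the Claim in Theorem~\ref{thm:reals_case}: exploit the binary digits of the real coordinate and the well-order of the ordinal coordinate). Your propagation lemma also has a gap on its own terms: the structure theory delivers injections, i.e.\ bijections with \emph{subsets} of sets built from $A$ and $\OR$, and rigidity is not inherited by induced subgraphs, so closing under power sets, products and disjoint unions does not reach every set of the model. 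The real content of this direction is the construction of rigid relations on arbitrary subsets of $\mathbb{R}\times\OR$, which your sketch does not supply.
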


We will prove in this paper that the same is true for $\SRR$ as well. That is,

\begin{theorem}
    $\SRR$ is independent from $\ZF,$ follows from $\AC,$ but is not equivalent to $\AC.$
\end{theorem}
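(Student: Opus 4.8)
The theorem has three parts: (a) $\SRR$ follows from $\AC$; (b) $\SRR$ is not equivalent to $\AC$; (c) $\SRR$ is independent of $\ZF$ (i.e.\ consistently fails). Part (a) is already granted by the Vopěnka--Pultr--Hedrlín result cited above, so there is nothing to do there. The real content is (b) and (c), and my plan is to lift the Hamkins--Palumbo arguments for $\RR$ (Theorem~\ref{thm:rr_principle}) to the strong setting using the ``elementary embeddings from homomorphisms'' machinery advertised in the abstract.

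For part (c), the failure of $\SRR$, I would argue that $\SRR$ must fail in any model with a proto-Berkeley-type large cardinal configuration — indeed the abstract promises a characterization of proto Berkeley cardinals via a strong failure of $\SRR$, so the natural route is: first show that if there is a proto Berkeley cardinal (or more modestly, if some suitable reflection/embedding property holds), then some particular set $G$ — plausibly a rank-initial segment $V_\kappa$ or the class-sized structure collapsed appropriately — admits a nontrivial endomorphism for \emph{every} binary relation one puts on it, because any such relation's structure would generate, via the homomorphism-to-embedding method, an elementary embedding $j:V_\lambda\to V_\lambda$ with critical point below the relevant rank, and the restriction of $j$ gives the nontrivial endomorphism. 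The key lemma to isolate and prove first is the promised ``method for producing elementary embeddings from homomorphisms'': given a homomorphism $h:(G,E)\to(G,E)$ that is not an isomorphism, when $(G,E)$ is chosen to code a membership-like structure (e.g.\ $(V_\kappa,\in)$ or a well-founded extensional relation), $h$ is forced to be an $\in$-embedding and hence, by Mostowski-type absoluteness, elementary. Then ``strong rigidity of $V_\kappa$'' would contradict the large-cardinal embedding, so $\SRR$ fails there; since such large cardinals are consistent relative to appropriate hypotheses (and $\ZF$ alone cannot refute them, or at least the relevant consequence), $\SRR$ is not a theorem of $\ZF$. A cleaner and weaker alternative for pure independence from $\ZF$ — not needing large cardinals — is to replicate Hamkins--Palumbo's own model (a symmetric/permutation model, or $L(\mathbb{R})$-style construction) in which $\RR$ fails, and check that the \emph{same} model kills $\SRR$; since $\SRR\Rightarrow\RR$, any model of $\neg\RR$ is automatically a model of $\neg\SRR$, so part (c) is in fact immediate from Theorem~\ref{thm:rr_principle} with no extra work.

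For part (b), that $\SRR$ does not imply $\AC$, the cheap observation again is that $\SRR\Rightarrow\RR$ and $\RR\not\Rightarrow\AC$ does \emph{not} by itself give $\SRR\not\Rightarrow\AC$ — we need a model of $\SRR+\neg\AC$, which is strictly more than a model of $\RR+\neg\AC$. So here there is genuine work: I would take the Hamkins--Palumbo model witnessing $\RR+\neg\AC$ (or a variant) and upgrade the rigid relations it produces to strongly rigid ones. The standard trick (going back to Vopěnka--Pultr--Hedrlín) is that from a rigid relation one can build a strongly rigid one by a local gadget-attachment construction — attaching to each vertex a distinguishing ``flag'' (a private finite configuration of in/out-arrows) so that any endomorphism must fix each vertex; the question is whether this construction goes through without choice, i.e.\ whether it is sufficiently canonical/definable in the symmetric model. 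I expect it does, because the flags can be built uniformly from the ordinal rank data already used to get rigidity, requiring at most a choiceless well-ordering of a small ancillary set. So the plan for (b): (1) recall/reconstruct the Hamkins--Palumbo permutation or symmetric model $M\models\ZF+\neg\AC$; (2) show that in $M$, every set has not just a rigid but a strongly rigid relation, by a uniform ``add a rigid tree/flag to each point'' argument that is invariant under the relevant group of automorphisms; (3) conclude $M\models\SRR+\neg\AC$.

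**Main obstacle.** The crux is step (2) of part (b): verifying that the passage ``rigid $\leadsto$ strongly rigid'' can be carried out \emph{symmetrically}, i.e.\ that the gadget construction respects the symmetry that makes the ground rigid relations exist in the choiceless model in the first place. Vopěnka--Pultr--Hedrlín's original construction uses $\AC$ freely (to well-order the vertex set and assign distinct ordinals), so it must be re-engineered to use only the structure that survives in the symmetric model. If the symmetric model's rigid relations already come equipped with enough definable scaffolding (which Hamkins--Palumbo's do, since they arise from an explicit hierarchy), this should work; if not, one may need to pass to a different choiceless model of $\neg\AC$ custom-built to support strongly rigid relations directly. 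A secondary subtlety, for the large-cardinal direction of (c) and for the proto-Berkeley characterization, is getting the homomorphism-to-embedding lemma stated at the right level of generality — in particular handling the case where $h$ is an endomorphism of a proper-class-sized or boundedly-class-sized structure — but since (c) is already free from Theorem~\ref{thm:rr_principle}, this only matters for the later characterization result, not for the theorem at hand.
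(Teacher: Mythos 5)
Your handling of the two easy parts coincides with the paper's: since $\SRR\Rightarrow\RR,$ independence from $\ZF$ is immediate from Theorem~\ref{thm:rr_principle}, and $\AC\Rightarrow\SRR$ is the cited Vop\v{e}nka--Pultr--Hedrl\'in result (the large-cardinal detour you sketch for independence is unnecessary, as you yourself concede). You also correctly isolate the real work --- a model of $\ZF+\neg\AC+\SRR$ --- and you pick the right model, the Hamkins--Palumbo symmetric Cohen model, in which every set injects into (a set in bijection with a subset of) $\mathbb{R}\times\OR.$

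The gap is in your step (2), the ``rigid $\leadsto$ strongly rigid'' upgrade by flag attachment, and the obstacle is not the one you flag. The problem is not whether the gadget construction can be carried out symmetrically; it is that no such choice-free upgrade is known, and the paper's own results indicate it cannot be done outright in $\ZF.$ Vop\v{e}nka--Pultr--Hedrl\'in do not upgrade rigid relations: they build strongly rigid ones directly from a well-ordering. An automorphism-rigid structure can have an enormous endomorphism monoid (a well-order is rigid yet admits every strictly increasing self-map), and killing all endomorphisms is qualitatively harder --- in particular, local flags do nothing to rule out endomorphisms that shift points ``upward'' along the ordinal coordinate. The paper's actual route is Theorem~\ref{thm:reals_case}: if even one $G\subset\mathbb{R}\times\OR$ lacks a strongly rigid relation, then there is a nontrivial elementary embedding $j:V_\alpha^L\to V_\alpha^L,$ whose critical point is inaccessible in $L.$ Hence one must first arrange, without loss of generality, that $L$ has no inaccessible cardinals (by passing to $V_\kappa^L$ below the least one), and only then does the explicit candidate relation --- a graph coding $(V_\alpha^L,\in),$ $<_L,$ cardinalities of finite sets, and the satisfaction relation of $V_\alpha^L$ --- become strongly rigid, yielding $\SRR$ in the symmetric model. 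Your proposal contains no analogue of this anti-large-cardinal reduction and no mechanism forcing an arbitrary endomorphism to be the identity rather than merely injective and structure-preserving; as written, the plan does not close.
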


The independence of $\SRR$ from $\ZF$ already follows from independence of $\RR$ and the fact that $\neg\RR\implies \neg\SRR.$ Also, \cite{vopenka} already proves that $\SRR$ follows from $\AC,$ as mentioned above. So, we only need to prove that $\SRR$ does not imply $\AC.$ We do so by combining ideas from \cite{hamkins} and a method for producing elementary embeddings from homomorphisms. First, we will prove the following in section 3:

\theoremstyle{plain}
\newtheorem*{thm:reals_case}{Theorem~\ref{thm:reals_case}}
\begin{thm:reals_case}
    If there exists a set $G\subset \mathbb{R}\times \OR$ for which there is no strongly rigid relation, then for some ordinal $\alpha$ there exists a nontrivial elementary embedding $j:V_\alpha^L\rightarrow V_\alpha^L.$
\end{thm:reals_case}

Using that, in section 4, we prove that $\SRR$ does not imply $\AC$ by building a model for $\ZF+\neg\AC+\SRR.$

\theoremstyle{plain}
\newtheorem*{thm:neg_ac_plus_srr}{Theorem~\ref{thm:neg_ac_plus_srr}}
\begin{thm:neg_ac_plus_srr}
    If $\ZF$ is consistent, then so is $\ZF+\neg\AC+\SRR.$
\end{thm:neg_ac_plus_srr}

In the last section, we will take a look at Proto Berkeley cardinals, a large cardinal notion inconsistent with $\AC$ introduced in \cite{lcbc}. Their definition is given in terms of the existence of elementary embeddings. 

\begin{definition}[\cite{lcbc}]
    A cardinal $\delta$ is \emph{proto Berkeley} iff for every transitive set $M$ such that $\delta\in M,$ there is a nontrivial elementary embedding $j:M\rightarrow M$ with critical point strictly below $\delta.$
\end{definition}

We will use our method for getting elementary embeddings from homomorphisms to characterize these cardinals in terms of the existence of homomorphisms rather than elementary embeddings. This is useful since the notion of a homomorphism is simpler and more ``elementary'' than the notion of elementary embeddings, and one would naturally want their (large cardinal) axioms to be as simple as possible. Recall that the \emph{Hartog's number} of a set $X,$ denoted by $\aleph(X),$ is the least ordinal $\alpha$ such that there is no injection from $\alpha$ into $X.$

\theoremstyle{plain}
\newtheorem*{thm:proto_berkeley_cardinals}{Theorem~\ref{thm:proto_berkeley_cardinals}}
\begin{thm:proto_berkeley_cardinals}
    A cardinal $\delta$ is proto Berkeley iff for any graph $(G,E)$ such that $\aleph(G)>\delta$ and any injection $f:\delta\rightarrow G,$ there is an endomorphism $h:(G,E)\rightarrow(G,E)$ such that $h\vert_{\range{f}}\neq \id.$
\end{thm:proto_berkeley_cardinals}


\section{Countable Sets and Ordinal Numbers}\label{sec:ordinals_case}

In this section, we first make the observation that every at most countable set has a strongly rigid relation, and then we present our main method for producing elementary embeddings from homomorphisms. In particular, we will prove that if for some ordinal $\beta$ there is no strongly rigid relation (so that there are plenty of homomorphisms), then there exists an elementary embedding $j:V_\alpha^L\rightarrow V_\alpha^L$ for some ordinal $\alpha.$ $V_\alpha^L$ here is simply $V_\alpha$ as computed in $L.$

By a \emph{countable} set we mean a set that is in bijection with $\omega.$ A set that is either countable or finite is said to be \emph{at most countable}. An \emph{uncountable} set is one that is not at most countable. Although in the context of $\ZF$ it might not be true that every set has a cardinality, we can still define the \emph{cardinality} of a set $x$ to be the least ordinal that is in bijection with $x,$ if it exists, denoting it by $|x|.$

We will also need some basic graph theoretic terminology before we can continue. Let $(G,E)$ be some graph. A \emph{loop} is an arrow $(u,v)\in E$ such that $u=v.$ A \emph{$n$-cycle} is a sequence $\langle v_0,\ldots,v_{n}\rangle$ of vertices such that $(v_i,v_{i+1})\in E,$ for all $i\in n,$ and $v_0=v_n.$ For any $n\in \omega$ and vertex $v,$ the \emph{outdegree} of $v$ is said to be $n$ iff there are exactly $n$ arrows outgoing from $v,$ that is, $|\{(v,w)\in E\mid w\in G\}|=n.$ Given $H\subset G,$ the \emph{subgraph induced by $H$} is the graph $(H,F)$ where $F=\{(u,v)\in E\mid u,v\in H\}.$

\begin{proposition}\label{prop:countable_case}
    Every at most countable set has a strongly rigid relation.
\end{proposition}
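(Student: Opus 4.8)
The plan is to handle the finite case and the countably infinite case separately, constructing in each case an explicit strongly rigid relation. For the countably infinite case, the idea is to identify the underlying set with $\omega$ and build a relation whose ``local geometry'' forces any endomorphism to be the identity: the standard trick is to attach to each vertex $n$ a distinguishing gadget, for instance a path (or bouquet of loops/cycles) of length depending on $n$, so that the outdegree or the length of some canonical finite configuration hanging off a vertex is an invariant that any homomorphism must respect, pinning down each vertex uniquely.

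More concretely, for the countably infinite case I would fix a bijection with $\omega$ and define $E$ so that from vertex $n$ there emanates a ``tag'': e.g. include arrows forming a simple directed path $n \to p^n_1 \to p^n_2 \to \cdots \to p^n_{n+1}$ of fresh vertices, together with one ``backbone'' structure linking the tags in a rigid way (say a loop at the end of the $n$-th tag, or cycles of pairwise distinct prime lengths), while ensuring no two tags are isomorphic as rooted subgraphs and no tag admits a nontrivial endomorphism into another. One must be slightly careful that these auxiliary ``tag vertices'' also get distinguished — this can be arranged by making every vertex lie at a unique finite distance from a unique nearest ``hub'' of known degree, so the whole graph is \emph{asymmetric even under endomorphisms}. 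Then I would verify: (a) $E$ is a relation on a countably infinite set, hence transportable to any countably infinite set via the chosen bijection; (b) any homomorphism $h\colon (G,E)\to(G,E)$ preserves the numerical invariants attached to each vertex, and an induction on the distance-to-hub shows $h(v)=v$ for all $v$. The finite case is easier: for a set of size $n$ one can, e.g., take a transitive tournament (linear order) augmented with loops on a distinguishing subset, or simply cite that finite strongly rigid graphs exist for every size $\ge 1$ (the one-point graph with no edges, the empty graph, etc., with small cases checked by hand) — actually a transitive tournament on $n$ vertices is already strongly rigid for $n\neq 2,3$ is false, so I would instead give the loop-tagging construction uniformly, which also covers all finite sizes including $0$ and $1$ trivially.

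The main obstacle I anticipate is making the distinguishing gadgets genuinely rigid against \emph{homomorphisms} rather than merely against \emph{automorphisms}: a homomorphism need not be injective, so it can collapse vertices and can map an edge-poor region into an edge-rich one, and it need not be surjective. Thus the invariants I use must be \emph{monotone} in the right direction (e.g. ``is the root of a subgraph containing a path of length exactly $k$ and no longer'' is not homomorphism-invariant, but ``has outdegree exactly $1$ and its unique successor has outdegree exactly $1$ \ldots'' can be made to work if the configuration is a forced ``spine'' that cannot be the image of anything shorter). The cleanest fix, which I would adopt, is to use loops: a vertex with a loop must map to a vertex with a loop, and by choosing for vertex $n$ a distinct number of loop-carrying ``satellites'' at distance $1$, plus a single global anchor forcing orientation, one gets rigidity against all endomorphisms by a straightforward counting-and-induction argument.
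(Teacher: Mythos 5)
Your overall plan (fix a bijection with $\omega$, build one explicit graph whose local invariants pin down every vertex, handle the finite case separately or by restriction) has the right shape, but the concrete device you finally adopt is fatally flawed: a strongly rigid graph on two or more vertices cannot contain any loop. If $\ell$ is a vertex with $(\ell,\ell)\in E$, then the constant map $h(v)=\ell$ is an endomorphism --- every arrow $(x,y)$ is sent to $(\ell,\ell)\in E$ --- and it is nontrivial as soon as $G$ has a second vertex. So the ``loop-carrying satellites,'' the ``bouquet of loops,'' and the ``loops on a distinguishing subset'' in your finite case all destroy strong rigidity outright. Even setting loops aside, the invariant ``vertex $n$ has exactly $n$ satellites at distance $1$'' is not preserved by a non-injective homomorphism: if the satellites carry no arrows among themselves, $h$ may collapse them all onto a single vertex, so $h(n)$ is only guaranteed \emph{at least one} satellite, and the counting argument collapses with them. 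You correctly identify non-injectivity as the main obstacle, but your proposed fix does not overcome it.

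The paper's proof resolves both issues with one loop-free graph on $\{u_0,u_1,u_2,\dots\}$: $u_0$ is the unique vertex of outdegree $2$, pointing at $u_1$ and $u_2$, and the extra arrow $(u_1,u_2)$ guarantees $h(u_1)\neq h(u_2)$ (their images are joined by an arrow and the graph has no loops), so $h(u_0)$ still has outdegree at least $2$ and must equal $u_0$; the same arrow forbids swapping $u_1$ and $u_2$, and induction along the remaining ray fixes every other vertex. The finite case is just the induced subgraph on $\{u_0,\dots,u_n\}$. If you want to salvage your construction, replace every loop gadget by this trick: an arrow $(a,b)$ in a loop-free graph forces $h(a)\neq h(b)$, which is the only mechanism here that makes counting arguments survive non-injective, non-surjective maps. (As a side remark, your aside about transitive tournaments is backwards: a finite transitive tournament \emph{is} strongly rigid for every $n$, since loop-freeness forces any endomorphism to be injective, hence an order-preserving bijection; it is only the infinite linear order that fails, via the successor map.)
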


\begin{proof}
    Let $G$ be a set that is at most countable. If $G$ is countable, then let $E$ be such that $(G,E)$ is the graph in Figure~\ref{fig:countable_case}. Fix an endomorphism $h:(G,E)\rightarrow (G,E).$ First, notice that $u_0$ has outdegree $2$ due to the two arrows connecting it with $u_1$ and $u_2.$ Since $u_1$ and $u_2$ are connected by an arrow, and since $(G,E)$ is free of loops $h(u_1)\neq h(u_2).$ Thus, $h(u_0)$ must have outdegree at least $2.$ But, $u_0$ is the only vertex with outdegree at least $2,$ and therefore $h(u_0)=u_0.$ Now, $h$ can either fix both $u_1$ and $u_2$ or swap them. However, the arrow $(u_1,u_2)$ ensures that swapping them is not possible, so that $h$ fixes $u_1$ and $u_2$ too. It is now easy to see that $h$ must fix every other vertex as well, meaning that $h=\id.$

    If $G$ is finite, say $G=\{u_0,\ldots,u_n\},$ then simply take the induced subgraph of the graph in Figure~\ref{fig:countable_case}.
\end{proof}

\begin{figure}[htb!]
    \centering
    \includegraphics[page=6]{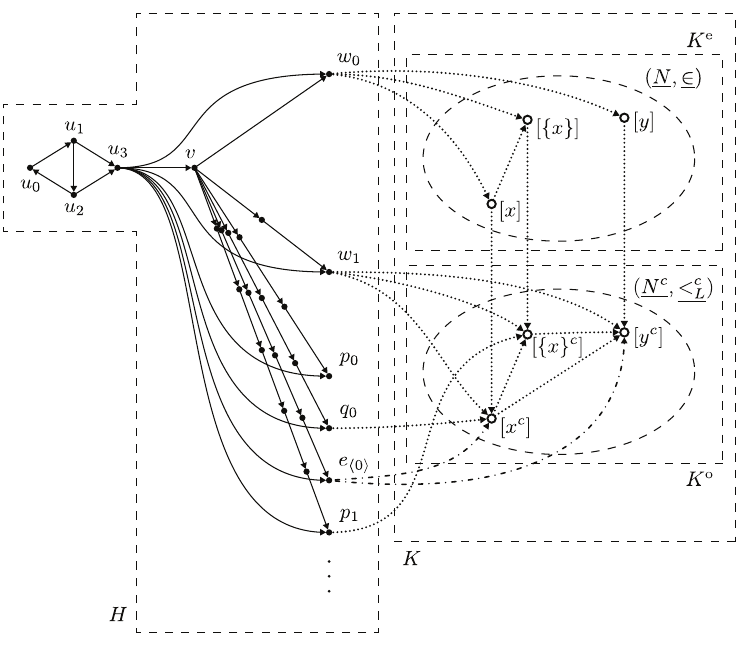}
    \caption{A strongly rigid graph on a countable set}
    \label{fig:countable_case}
\end{figure}



The language of set theory, $\LST,$ is defined in $V$ to be the first order language with the binary relation symbol $\in.$ Denote by $\FORM$ the set of all formulas of $\LST.$ To distinguish metatheoretic formulas from those in $V,$ whenever a formula appears in this paper, we will mention that it belongs to the set $\FORM$ precisely when the formula is in $V.$

\begin{theorem}\label{thm:ordinal_case}
    If there exists an ordinal $\beta$ for which there is no strongly rigid relation, then for some ordinal $\alpha$ there exists a nontrivial elementary embedding $j:V_\alpha^L\rightarrow V_\alpha^L.$
\end{theorem}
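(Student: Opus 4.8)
The plan is to exploit the hypothesis — that some ordinal $\beta$ has no strongly rigid relation — to manufacture, for a carefully chosen $\beta$, a homomorphism between two structures coding levels of $L$, and then to upgrade this homomorphism to an elementary embedding $j : V_\alpha^L \to V_\alpha^L$. The key idea is that one should not work with $\beta$ as a bare ordinal, but rather encode a rich enough structure into a graph on $\beta$ so that any endomorphism of that graph is forced to respect the coded structure. Concretely, I would fix a limit ordinal $\alpha$ with $V_\alpha^L \models \ZF^-$ (or enough of it) and such that $|V_\alpha^L| = \alpha = \beta$ in $V$ — here one uses that $L$-cardinals can collapse, and more importantly that by Proposition~\ref{prop:countable_case} the ordinal $\beta$ witnessing the failure must be uncountable, so we have room; then I would build a graph $(G,E)$ with $G \subseteq \beta$ (or $G = \beta$ after re-indexing along a bijection) that codes two disjoint isomorphic copies of the membership structure $(V_\alpha^L, \in)$ together with "scaffolding" gadgets (out-degree markers, cycles, distinguished vertices) that pin down, inside any endomorphism, which vertices play which role.

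The main steps, in order, are: (1) choose $\alpha$ and reduce to the case $\beta = \alpha$, absorbing the coding bijection; (2) design the graph $G$ so that it is the disjoint union of a "rigid skeleton" $S$ and two copies $A_0, A_1$ of $(V_\alpha^L,\in)$-as-a-graph, wired to $S$ in such a way that (a) $S$ itself is strongly rigid in the sense of Proposition~\ref{prop:countable_case}-style arguments, forcing any endomorphism to fix $S$ pointwise, (b) the wiring forces any endomorphism to send each $A_i$ into $A_0 \cup A_1$ as a graph-homomorphism of the $\in$-structure, and (c) the "apartness" gadget (an edge, or a cycle, linking the roots of $A_0$ and $A_1$) forbids the endomorphism from being the identity — so it must map, say, $A_1$ into $A_0$, or collapse something; (3) invoke the hypothesis to get a nontrivial endomorphism $h$, extract from it a nontrivial graph-homomorphism $(V_\alpha^L,\in) \to (V_\alpha^L,\in)$; (4) show that a homomorphism of the $\in$-relation on a transitive set that is "nontrivial" in the appropriate sense is in fact $\in$-preserving and injective, hence an $\in$-isomorphism onto its range, and then — because $V_\alpha^L$ is transitive and the map respects $\in$ in both directions on its range — argue it is actually elementary and extends/restricts to a nontrivial elementary self-embedding $j : V_\alpha^L \to V_\alpha^L$. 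The Kunen-style point that a nontrivial $\in$-homomorphism between well-founded extensional structures is automatically "rank-nonincreasing in a controlled way" and cannot be a bijection-with-moved-point without being an honest embedding is what makes step (4) go; here one should be careful to get $j$ nontrivial and to land back inside $V_\alpha^L$ rather than some larger $V_\gamma^L$.

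The hard part will be step (2): engineering the graph so that a merely-endomorphism-level map is coerced into being a map of $\in$-structures. The subtlety is that an endomorphism need only preserve edges, not reflect them, so the $\in$-relation as a raw graph admits many "degenerate" homomorphisms (e.g. collapsing everything to a single vertex with a loop — except $(V_\alpha^L,\in)$ has no loops, but it could collapse onto a short $\in$-chain). The scaffolding $S$ must therefore simultaneously (i) be strongly rigid on its own, (ii) attach to every vertex of $A_0 \cup A_1$ a "fingerprint" — I envisage assigning to the vertex coding a set $x$ of $L$-rank $\rho$ a pendant path of length coded by $\rho$, or a private out-degree, via ordinal bookkeeping available because $G \subseteq \beta$ — so that distinct ranks cannot be identified, and (iii) keep $A_0$ and $A_1$ "rigidly apart." Getting all three to coexist, while keeping the whole graph living on an ordinal for which the hypothesis applies (which may require first observing that if some ordinal lacks a strongly rigid relation then a cofinal proper class of ordinals do, or that one may take $\beta$ as large as needed), is the delicate combinatorial core; once the homomorphism is in hand, the passage to elementarity is comparatively standard Mostowski–Kunen territory.
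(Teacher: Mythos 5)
Your overall architecture---two copies of a structure built around $V_\alpha^L$, wired to a rigid scaffold, with the hypothesis on $\beta$ supplying a nontrivial endomorphism---matches the paper's construction in outline. But your step (4) contains a genuine gap that the rest of the proposal does not repair: you claim that an injective, $\in$-preserving-and-reflecting map of the transitive set $V_\alpha^L$ into itself is ``actually elementary,'' calling the passage standard Mostowski--Kunen territory. No such principle exists. An $\in$-isomorphism of $V_\alpha^L$ onto a (necessarily non-transitive, if the map is nontrivial) subset of itself preserves atomic formulas and nothing more; elementarity concerns quantifiers ranging over all of $V_\alpha^L$, and that cannot be recovered from the membership structure of the range alone. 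Mostowski's theorem only says an $\in$-isomorphism onto a \emph{transitive} set is the identity, and Kunen-style arguments begin from an embedding already assumed elementary. If the implication you invoke were true, the hard part of the theorem would evaporate.

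The missing idea is that the graph must code the full first-order \emph{satisfaction relation} of $V_\alpha^L$, not just its membership relation. The paper does this by adjoining, for each G\"odel code $n$ of a formula $\psi$, a scaffold vertex $q_n$ with an arrow to (the copy of) $x$ exactly when $V_\alpha^L\models\psi[x]$, and similarly for finite tuples --- which is why the ambient set is closed under finite subsets and why one must verify that the endomorphism commutes with pairing and tupling. Since the $q_n$ are pinned down by the rigid scaffold, any endomorphism $h$ preserves these arrows, which translates directly into $V_\alpha^L\models\psi[x]\implies V_\alpha^L\models\psi[h(x)]$; running the same argument with $\neg\psi$ yields the biconditional. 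That is where elementarity actually comes from, and without it your step (4) cannot be completed. A secondary, repairable weakness: your ``fingerprint by rank'' device for injectivity is vaguer and more fragile than the paper's solution, which puts a wellorder ($<_L$) on a second copy of the structure so that any two distinct vertices there are joined by an arrow, and non-injectivity would then produce a forbidden loop.
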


\begin{proof}
    Let $\kappa=|\beta|.$ Since $\kappa$ is a cardinal in $V,$ it is also a cardinal in $L.$ The fact that $\GCH$ holds in $L$ implies that for every cardinal $\lambda$ in $L$ there is some ordinal $\alpha$ such that $|V_\alpha^L|^L=\lambda.$ Fix $\alpha$ such that $|V_\alpha^L|^L=\kappa.$ Our argument would be easier if $\alpha$ were a limit ordinal so that $V_\alpha^L$ is closed under taking ordered pairs and finite sequences. But, we do not know this for sure, so let $N\supset V_\alpha^L\cup \{V_\alpha^L\}$ be the smallest set closed under taking finite subsets, i.e., $\{x_1,\ldots,x_n\}\subset N\implies \{x_1,\ldots,x_n\}\in N,$ for all $n\in\omega.$ Notice that $N\in L$ and has cardinality $\kappa$ in $L,$ and since $\kappa$ is a cardinal in $V$ too, we have that $|N|=\kappa.$ Furthermore, $N$ is transitive and closed under the operations:
    \begin{enumerate}[label=N\arabic*]
        \item $x,y \mapsto (x,y).$
        \item $x_1,\ldots, x_n \mapsto \langle x_1,\ldots, x_n\rangle,$ for all $n\in\omega.$
    \end{enumerate}


    Let $N^c$ be a copy of $N.$ Define $<_L^c\subset N^c\times N^c$ by $x^c<_L^c y^c \iff x<_L y.$ Consider the graph $(G,E)$ in Figure~\ref{fig:L_case}, where
    \begin{enumerate}[label=G\arabic*]
        \item $(w_0,x),(x,x^c),(w_1,x^c)\in E,$ for every $x\in N.$
        \item $(x,y)\in E$ iff $x\in y,$ for every $x,y\in N.$
        \item $(x^c,y^c)\in E$ iff $x^c<_L^c y^c,$ for every $x^c,y^c\in N^c.$
        \item $(p_n,x^c)\in E$ iff $|x|=n,$ for every $n\in\omega$ and every $x^c\in N^c.$
        \item For every $n\in \omega$ and every $x^c\in N^c,$ $(q_n,x^c)\in E$ iff one of the followings hold:
        \begin{enumerate}[label=(\alph*)]
            \item $x\in V_\alpha^L,$ $n=\ulcorner\psi(a)\urcorner$ for some $\psi(a)\in\FORM,$ and $V_\alpha^L\satisfies \psi[x].$
            \item $x=\langle x_1,\ldots,x_m\rangle$ for some $m>1,$ $x_1,\ldots,x_m\in V_\alpha^L,$ $n=\ulcorner \psi(a_1,\ldots,a_m)\urcorner$ for some $\psi(a_1,\ldots,a_m)\in \FORM,$ and $V_\alpha^L\satisfies \psi[x_1,\ldots,x_m].$
        \end{enumerate}
    \end{enumerate}
    \begin{figure}[htb!]
        \centering
        \includegraphics[page=3]{images/figures_strong_rigidity.pdf}
        \caption{}
        \label{fig:L_case}
    \end{figure}
    The cardinality of $G$ is $|A|+|B|+|N|+|N^c|=4+\omega+\kappa+\kappa=\kappa.$ Thus, the graph $(G,E)$ has a nontrivial endomorphism $h:(G,E)\rightarrow (G,E).$ We will show that the restriction $h\vert_{V_\alpha^L}$ is a nontrivial elementary embedding of $V_\alpha^L$ into itself.

    We start by showing that $h$ fixes every vertex in $A.$ Observe that the $3$-cycle $c=(u_0,u_1,u_2,u_3)$ is the only cycle in the entire graph: None of the arrows outside of those in $c, N,$ and $N^c$ can belong to a cycle because they only flow from left to right, while inside of each of $N$ and $N^c,$ the wellfoundedness of $\in$ and $<_L$ prohibit cycles. As such, $h$ must send $c$ to itself. The two arrows $(u_1,u_3)$ and $(u_2,u_3)$ ensure that $c$ is not rotated by $h,$ and it is now straightforward to see that $h$ fixes everything in $A.$

    Next, we show that every vertex in $B$ is fixed. Notice that the vertices in $B-C$ are unique in that they are precisely those that have an incoming arrow from $u_3.$ As $h$ fixes $u_3,$ we have that $B-C$ is closed under $h.$ Consider now the subgraphs of the form shown in Figure~\ref{fig:length_considerations}. Since the end vertices of such subgraphs are sent to the end vertices of similar subgraphs, and since $h$ must maintain lengths of such graphs, we conclude that every vertex in $B$ must be fixed.

    \begin{figure}[htb!]
        \centering
        \includegraphics[page=7]{images/figures_strong_rigidity.pdf}
        \caption{}
        \label{fig:length_considerations}
    \end{figure}

    Similar to how $B-C$ was closed under $h,$ the two sets $N$ and $N^c$ are also closed under $h.$ Furthermore, because $(x,y^c)\in E$ iff $x=y$ by G1, we must have $h(x)^c=h(x^c),$ for all $x\in N.$ Thus, $h$ behaves the same in $N$ and $N^c.$ Now, it is easy to see that $h\vert_{N}$ is injective because $h\vert_{N^c}$ is:
    \begin{align*}
        x^c\neq y^c &\implies (x^c,y^c)\in E \ \vee\ (y^c, x^c) \in E\\
        &\implies (h(x^c),h(y^c))\in E\ \vee\ (h(y^c), h(x^c)) \in E\\ &\implies h(x^c)\neq h(y^c).
    \end{align*}

    Let us show that, for all $n\in\omega$ and all distinct $x_1,\ldots,x_n\in N,$
    \begin{equation}\label{eq:h_respects_finite_sets}
        h(\{x_1,\ldots,x_n\})=\{h(x_1),\ldots,h(x_n)\}.
    \end{equation}
    Denote by $S$ the set $\{x_1,\ldots,x_n\}.$ We have $(x_i,S)\in E,$ for all $i,$ by G2. By endomorphism of $h,$ we have $(h(x_i),h(S))\in E,$ for all $i.$ This means that $h(x_i)\in h(S),$ for all $i,$ by G2. We already know that $h$ is injective on $N,$ so we deduce that $h(S)$ has at least the $n$ distinct elements $h(x_1),\ldots, h(x_n).$ We will be done if we can show that the cardinality of $h(S)$ is $n.$ By G4, $(p_n,S^c)\in E,$ and by applying $h$ to this we get $(h(p_n),h(S^c))\in E.$ $p_n$ is fixed by $h$ and we saw in the previous paragraph that $h(S^c)=h(S)^c,$ therefore, $(p_n,h(S)^c)\in E.$ Again by G4, $|h(S)|=n.$

    Using \eqref{eq:h_respects_finite_sets} above, we can easily show that, for all $(x,y)\in N,$ $h((x,y))=(h(x),h(y)):$ 
    \begin{align*}
        h((x,y)) 
        &= h(\{\{x\},\{x,y\}\}) \\
        &= \{h(\{x\}),h(\{x,y\})\} \\
        &= \{\{h(x)\},\{h(x),h(y)\}\} \\
        &= (h(x),h(y)).
    \end{align*}
    Observe that, by G5(a) and the fact that all $m\in\omega$ are uniquely definable inside of $V_\alpha^L,$ we must have $h(m)=m$ for all $m\in\omega.$ We can now also show that, for all $\langle x_1,\ldots,x_n\rangle\in N,$ $h(\langle x_1,\ldots,x_n\rangle)= \langle h(x_1),\ldots,h(x_n)\rangle:$
    \begin{align*}
        h(\langle x_1,\ldots,x_n\rangle) 
        &= h(\{(0,x_1),\ldots,(n-1,x_n)\}) \\
        &= \{h((0,x_1)),\ldots,h((n-1,x_n))\} \\
        &= \{(h(0),h(x_1)),\ldots,(h(n-1),h(x_n))\} \\
        &= \{(0,h(x_1)),\ldots,(n-1,h(x_n))\} \\
        &= \langle h(x_1),\ldots,h(x_n)\rangle.
    \end{align*}

    There are two last facts that we will need before we present our final argument. The first one is the fact that $x\in V_\alpha^L \implies h(x)\in V_\alpha^L.$ We show this by proving that $h$ fixes $V_\alpha^L,$ so that $x\in V_\alpha^L \implies h(x)\in h(V_\alpha^L)=V_\alpha^L.$ Observe that every set in $N-V_\alpha^L$ is finite except for $V_\alpha^L,$ which is infinite because $|V_\alpha^L|=\kappa>\omega$ by Proposition~\ref{prop:countable_case}. Also, $h(V_\alpha^L)$ must be an infinite set too by injectivity of $h$ on $(N,\in),$ so we will be done if we can show that $h(V_\alpha^L)\notin V_\alpha^L.$ But, $h(V_\alpha^L)\in V_\alpha^L$ is clearly not possible because repeated applications of $h$ to this relation can give an infinitely descending chain $V_\alpha^L\ni h(V_\alpha^L)\ni h^2(V_\alpha^L)\ni\ldots.$

    The other fact we need is that indeed $h$ is nontrivial on $V_\alpha^L.$ Suppose that this is not the case. We have already shown that $h$ fixes $V_\alpha^L$ as well as everything outside of $N$ and $N^c.$ A simple inductive argument using the fact that $h(\{x_1,\ldots,x_n\})=\{h(x_1),\ldots, h(x_n)\},$ for any $n\in\omega,$ will easily demonstrate that $h$ must fix everything in $N-(V_\alpha^L\cup \{V_\alpha^L\})$ too. This contradicts the fact that $h:(G,E)\rightarrow (G,E)$ is nontrivial. 
    
    We are now ready to conclude our proof. Fix a formula $\psi(a_1,\ldots,a_m)\in \FORM$ for some $m>1,$ the case where $\psi$ is a single variable formula is the same. We have, for any $x_1,\ldots,x_m\in V_\alpha^L,$
    \begin{align*}
        V_\alpha^L \satisfies \psi[x_1,\ldots,x_m]
        \iff& (q_{\ulcorner \psi(a_1,\ldots,a_m)\urcorner},\langle x_1,\ldots, x_m\rangle^c)\in E \\
        \implies& (h(q_{\ulcorner \psi(a_1,\ldots,a_m)\urcorner}),h(\langle x_1,\ldots, x_m\rangle^c))\in E \\
        \iff& (q_{\ulcorner \psi(a_1,\ldots,a_m)\urcorner},\langle h(x_1),\ldots, h(x_m)\rangle^c)\in E \\
        \iff& V_\alpha^L\satisfies \psi(h(x_1),\ldots, h(x_m)).
    \end{align*}
    By using $\neg \psi$ in place of $\psi$ in the above displayed argument, we can also reverse that single one way implication in the second line. Thus, for any $x_1,\ldots,x_m\in V_\alpha^L,$
    \[
    V_\alpha^L\satisfies \psi(x_1,\ldots,x_m)\iff V_\alpha^L\satisfies \psi(h(x_1),\ldots, h(x_m)).\qedhere
    \]
\end{proof}


\section{Subsets of $\mathbb{R}\times\OR$}

In this section, we generalize the theorem from the previous section to sets $G\subset \mathbb{R}\times \OR.$ We take $\mathbb{R}$ to be the set of all binary sequences of length $\omega,$ i.e., $\mathbb{R}=2^\omega.$ The relation ${<}\subset \mathbb{R}\times\mathbb{R}$ is the usual ``less than'' relation of real numbers. We will need the following lemma first:

\begin{lemma}\label{lem:reals_case_no_countable_subset}
    Every set $G\subset \mathbb{R}\times \OR$ with no countable subset has a strongly rigid relation.
\end{lemma}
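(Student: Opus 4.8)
The plan is to use the canonical linear order on $\mathbb{R}\times\OR$ itself as the strongly rigid relation. Recall that a set has no countable subset precisely when it admits no injection from $\omega$, i.e.\ precisely when it is Dedekind-finite. The key observation is that on a Dedekind-finite set \emph{any} strict linear order is already strongly rigid: a homomorphism of a strict linear order into itself is the same thing as a strictly increasing self-map, and such a map on a Dedekind-finite order cannot displace any point, since doing so would produce an infinite ascending or descending chain, hence a countable subset.

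Concretely, I would first fix the lexicographic order $\prec$ on $\mathbb{R}\times\OR$ defined by $(r,\xi)\prec(s,\eta)$ iff $\xi\in\eta$, or $\xi=\eta$ and $r<s$. Since $<$ linearly orders $\mathbb{R}=2^\omega$ and $\in$ well-orders the ordinals, a routine check (irreflexivity, transitivity, trichotomy) shows $\prec$ is a strict linear order on $\mathbb{R}\times\OR$; in particular its restriction $E=\{(x,y)\in G\times G : x\prec y\}$ is a strict linear order on $G$. I then claim $E$ is strongly rigid. Let $h\colon(G,E)\to(G,E)$ be an endomorphism, so that $x\prec y\implies h(x)\prec h(y)$ for all $x,y\in G$; by trichotomy this already forces $h$ to be injective, and it preserves $\prec$. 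Suppose toward a contradiction that $h(a)\neq a$ for some $a\in G$, and define $a_0=a$ and $a_{n+1}=h(a_n)$ — a plain definition by recursion from a single point, so no choice is involved. If $a\prec h(a)$, then since $h$ preserves $\prec$ we get $a_n\prec a_{n+1}$ for all $n$ by induction, hence $a_0\prec a_1\prec a_2\prec\cdots$; if instead $h(a)\prec a$, the same argument gives $a_0\succ a_1\succ a_2\succ\cdots$. Either way the $a_n$ are pairwise distinct (being strictly monotone in a linear order), so $n\mapsto a_n$ injects $\omega$ into $G$, contradicting the hypothesis that $G$ has no countable subset. Hence $h=\id$, so $(G,E)$ has no nontrivial endomorphism (and a fortiori no nontrivial automorphism), i.e.\ $E$ is strongly rigid.

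I do not expect a serious obstacle here: the only idea needed is to recognize that endomorphisms of a strict linear order are exactly the strictly increasing maps and that Dedekind-finiteness forbids such a map from moving any point. Two small points deserve care in the write-up. First, the argument is uniform — it needs no case split into the finite, countably infinite, and uncountable-but-Dedekind-finite cases, though for finite or countable $G$ the conclusion is in any case already covered by Proposition~\ref{prop:countable_case}. Second, one should make explicit that the chain $\langle a_n\rangle$ is obtained by ordinary recursion, so that its existence requires no appeal to any form of choice — this is the whole reason the argument survives in $\ZF$ without $\AC$.
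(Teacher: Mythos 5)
Your proof is correct and is essentially the paper's own argument: the paper uses exactly the same lexicographic relation on $G\subset\mathbb{R}\times\OR$ and derives the same contradiction by iterating a nontrivial endomorphism to produce an infinite $E$-chain, hence a countable subset. Your write-up is slightly more explicit (handling the descending case directly rather than by ``without loss of generality,'' and noting that the recursion needs no choice), but there is no substantive difference.
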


\begin{proof}
    Define the relation $E$ on $G$ by setting $(r_1,\xi_1)\mathrel{E}(r_2,\xi_2)$ iff $\xi_1<\xi_2$ or $\xi_1=\xi_2\ \land\ r_1<r_2.$ Suppose $h:(G,E)\rightarrow (G,E)$ is a nontrivial endomorphism, and let $(r,\xi)\in G$ be such that $h((r,\xi))\neq (r,\xi).$ Without loss of generality, we may assume $(r,\xi)\mathrel{E} h((r,\xi)).$ Repeated applications of $h$ to this relation will give $(r,\xi)\mathrel{E} h((r,\xi))\mathrel{E}h^2((r,\xi))\mathrel{E}\ldots.$ But, this chain forms a countable subset of $G,$ contrary to the assumption that $G$ has no countable subsets.
\end{proof}

Let $\pi_2:V\rightarrow V$ be the operation defined by $(x_1,x_2)\mapsto x_2.$ For any finite sequence $s=\langle x_0,\ldots,x_{n-1}\rangle\in V^{<\omega},$ define $\length(s)=\domain(s)=n.$

An ordinal is said to be \emph{odd} iff it can be written in the form $\alpha+2n+1$ for some limit ordinal $\alpha$ and some $n\in\omega.$ An ordinal is \emph{even} iff it is not odd. Denote by $\OR^{\mathrm{o}}$ and $\OR^{\mathrm{e}}$ the classes of odd and even ordinals, respectively. 

\begin{theorem}\label{thm:reals_case}
    If there exists a set $G\subset \mathbb{R}\times \OR$ for which there is no strongly rigid relation, then for some ordinal $\alpha$ there exists a nontrivial elementary embedding $j:V_\alpha^L\rightarrow V_\alpha^L.$
\end{theorem}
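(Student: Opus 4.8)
The plan is to mimic the structure of the proof of Theorem~\ref{thm:ordinal_case}, but to replace the role of the ordinal $\beta$ there by a set $G\subset\mathbb{R}\times\OR$. By Lemma~\ref{lem:reals_case_no_countable_subset}, the hypothesis gives us not just a set without a strongly rigid relation but in fact a set $G\subset\mathbb{R}\times\OR$ that \emph{has a countable subset}; fix such a subset and call it $D=\{d_0,d_1,d_2,\ldots\}$, with a fixed enumeration. This countable subset will play the role that $\omega$ played inside $V_\alpha^L$ in the previous proof: it is what will let us ``tag'' natural-number parameters and formula codes. The key point is that although $G$ need not contain any ordinals as elements, it does contain a definable copy of a linear order (the lexicographic-type order from the proof of Lemma~\ref{lem:reals_case_no_countable_subset}, or simply the well-order on the $\OR$-coordinate refined by $<$ on the $\mathbb{R}$-coordinate) which is wellfounded, and we have the projection $\pi_2$ picking out the ordinal part, so all the combinatorial gadgets from Figure~\ref{fig:L_case} can be reassembled.

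First I would choose, as before, an ordinal $\alpha$ with $|V_\alpha^L|^L=|G|$: since $G\subset\mathbb{R}\times\OR$ has cardinality some $\kappa$ which is a cardinal in $L$, and $\GCH$ holds in $L$, such an $\alpha$ exists; moreover $|G|=\kappa\geq\omega$ since $G$ has a countable subset. Next I would form the transitive closure $N\supset V_\alpha^L\cup\{V_\alpha^L\}$ closed under finite subsets exactly as in the proof of Theorem~\ref{thm:ordinal_case}, so that $N$ is transitive, closed under forming ordered pairs and finite sequences, lies in $L$, and has $|N|=\kappa=|G|$. Now I would build a graph $(\Gamma,F)$ on a vertex set of size $\kappa$ consisting of: a finite ``anchor'' gadget (a $3$-cycle with extra arrows, as in Figure~\ref{fig:L_case}) used to pin down finitely many distinguished vertices; one vertex for each element of $N$ with arrows coding $\in$; a copy $N^c$ with arrows coding the wellorder $<_L^c$; arrows $(w_0,x),(x,x^c),(w_1,x^c)$ linking $N$ to $N^c$; and then, crucially, instead of vertices $p_n$ and $q_n$ indexed by $n\in\omega$, I would use the elements $d_n$ of the countable subset $D\subset G$ themselves, suitably attached to the anchor gadget via a ``length ladder'' (Figure~\ref{fig:length_considerations}) so that each $d_n$ is rigidly fixed: the ladder forces $h(d_n)=d_n$ because $h$ must preserve the length of the finite path from the anchor to $d_n$. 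With the $d_n$ fixed, the arrows ``$(d_n,S^c)\in F$ iff $|S|=n$'' and ``$(d_n,x^c)\in F$ iff $n$ codes a formula true of $x$ in $V_\alpha^L$'' behave exactly as G4 and G5 did. The hypothesis that $G$ has no strongly rigid relation then hands us a nontrivial endomorphism $h$ of $(\Gamma,F)$, and the entire bookkeeping from the previous proof — $h$ fixes the anchor, fixes $D$, preserves $\in$, $<_L^c$, finite subsets, ordered pairs, finite sequences, fixes $V_\alpha^L$, is nontrivial on $V_\alpha^L$, and finally is elementary on $V_\alpha^L$ — goes through verbatim, using $d_{\ulcorner\psi\urcorner}$ in place of $q_{\ulcorner\psi\urcorner}$.

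The main obstacle — and the only place real care is needed — is arranging that the distinguished vertices coming from the countable subset $D$ are genuinely \emph{fixed} by every endomorphism, rather than merely permuted among themselves. In Theorem~\ref{thm:ordinal_case} the $p_n$ and $q_n$ were pinned down by the length ladders of Figure~\ref{fig:length_considerations}, which assign to each $n$ a path of distinct length out of the fixed vertex $u_3$; I need to make sure the same device is available here, i.e. that I can attach to each $d_n$ a path of length $n{+}1$ (say) consisting of fresh vertices not already used, so that preservation of path-length forces $h(d_n)=d_n$. Since I am free to add $\kappa$-many auxiliary vertices without changing $|\Gamma|=\kappa$, this is unproblematic, but it must be stated rather than left implicit. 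A secondary point to check is that the wellfoundedness argument ruling out cycles still applies: cycles can only occur in the anchor gadget, inside $(N,\in)$, or inside $(N^c,<_L^c)$, and the latter two are wellfounded, while all the new arrows (anchor-to-ladder, ladder edges, $N$-to-$N^c$, $D$-to-$N^c$) flow ``one way'' and cannot close up into cycles — so the unique cycle is the anchor $3$-cycle, exactly as before. Once these two points are in place the proof is a faithful adaptation, and I would present it by constructing $(\Gamma,F)$ explicitly and then invoking the computations of Theorem~\ref{thm:ordinal_case} for the repeated-structure parts.
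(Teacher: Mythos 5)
There is a genuine gap, and it is at the very first step: you write ``since $G\subset\mathbb{R}\times\OR$ has cardinality some $\kappa$,'' and then build your graph $(\Gamma,F)$ on a well-orderable vertex set of size $\kappa$ (an anchor gadget, $N$, $N^c$, the $d_n$, and ordinal-indexed ladders), relying on a bijection $G\to\Gamma$ to transfer the failure of strong rigidity from $G$ to $\Gamma$. In $\ZF$ a subset of $\mathbb{R}\times\OR$ need not be in bijection with any ordinal, and if it is, the theorem is already an immediate consequence of Theorem~\ref{thm:ordinal_case}: transfer the nonexistence of a strongly rigid relation along the bijection to the ordinal $|G|$ and apply that theorem. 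So your argument only covers the case that was already done, and the entire content of Theorem~\ref{thm:reals_case} --- the non-well-orderable case --- is untouched. (A secondary misreading: Lemma~\ref{lem:reals_case_no_countable_subset} supplies a countable subset $H$, but its role in the paper is to serve as the set of control vertices, not merely as an index set for the $p_n$/$q_n$ tags.)

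The paper's proof works around the missing well-ordering as follows. It splits $G$ into the countable $H$ and $K=G-H$, observes that the \emph{projection} $\pi_2[K]$ is a set of ordinals and hence well-orderable with some cardinality $\kappa\geq\omega$ (this requires a separate claim ruling out $\kappa<\omega$), and then relabels the second coordinates so that the fibers of $\pi_2$ over $K$ are indexed exactly by $N\cup N^c$. The graph is built on $G$ itself: each fiber $[x]=\{(r,x)\in K\}$ is treated as a single ``fat vertex,'' with arrows placed between \emph{all} pairs of elements of two fibers whenever the corresponding arrow of the Theorem~\ref{thm:ordinal_case} graph is present. An endomorphism then induces a well-defined map $j$ on the index set $N\cup N^c$, and elementarity is argued as before. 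One further gadget is essential and absent from your proposal: vertices $e_s$ for $s\in 2^{<\omega}$ with $(e_s,(r,\xi))\in E$ iff $r\vert_{\length(s)}=s$, which prevent the nontriviality of $h$ from being absorbed entirely \emph{inside} a single fiber (i.e.\ $h((r_1,x))=(r_2,x)$ with $r_1\neq r_2$ while $j=\id$). Without that, the induced $j$ could be the identity and the conclusion would fail.
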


\begin{proof}
    Take such a set $G.$ By Proposition~\ref{prop:countable_case}, $G$ must be uncountable, and by Lemma~\ref{lem:reals_case_no_countable_subset}, it must have a countable subset. Fix $H\subset G$ that is countable, and let $K=G-H.$ By simply renaming ordinals, we may assume that $\pi_2[K]=\kappa,$ where $\kappa$ is the cardinality of $\pi_2[K].$

    \begin{claim}\label{claim:thm_reals_case}
        $\kappa\geq\omega.$
    \end{claim}

    \begin{proof}[Proof of claim]
        We will show that if $\kappa<\omega,$ then $G$ would have a strongly rigid relation, contrary to the choice of $G.$ So, assume that $\kappa<\omega.$ Notice that $K$ cannot be empty by Proposition~\ref{prop:countable_case}, hence $0<\kappa<\omega.$ Fix $n\in\omega$ such that $\kappa=n+1.$ Partition $K$ by defining, for each $i\in n+1,$ $K_i=\{(r,\xi)\in K \mid \xi=i\}.$
        Consider the graph $(G,E)$ shown in Figure~\ref{fig:reals_case_countable_subset_finite_kappa}, where 
        \begin{enumerate}[label=G\arabic*]
            \item For every $i\in n+1:$ $(w_i,(r,i))\in E,$ for all $(r,i)\in K_i.$
            \item $(e_s,(r,\xi))\in E$ iff $r\vert_{\length(s)}=s,$ for all $s\in 2^{<\omega}$ and all $(r,\xi)\in K.$
        \end{enumerate}
        
        \begin{figure}[htb!]
            \centering
            \includegraphics[page=5]{images/figures_strong_rigidity.pdf}
            \caption{}
            \label{fig:reals_case_countable_subset_finite_kappa}
        \end{figure}

        Fix an endomorphism $h:(G,E)\rightarrow (G,E).$ Arguing just as in the proof of Theorem~\ref{thm:ordinal_case}, all the vertices in $H$ must be fixed by $h$ and, moreover, each of the $K_i$s is closed under $h.$ Suppose, working towards a contradiction, that for some fixed $i\in n+1,$ $h((r_1,i))=(r_2,i)$ for distinct $(r_1,i),(r_2,i)\in K_i.$ Fix $k\in\omega$ such that $r_1\vert_{k}\neq r_2\vert_{k}.$ By G2, we must have $(e_{r_1\vert_{k}},(r_1,i))\in E.$ Applying $h,$ we get $(h(e_{r_1\vert_{k}}),h((r_1,i))) = (e_{r_1\vert_{k}},(r_2,i))\in E.$ By G2, this means that $r_2\vert_{k}=r_1\vert_{k},$ a contradiction. Thus, $h$ must fix every vertex in each of the $K_i$s as well, so that $h=\id.$ As $h$ was arbitrary, we conclude that $E$ is a strongly rigid relation on $G.$
    \end{proof}

    Continuing with the proof of the theorem, we will use the following notation: Given any two sets $X,Y$ of vertices, instead of saying, for all $a\in X$ and all $b\in Y,$ $(a,b)\in E,$ we will simply say $(X,Y)\in E^*.$ In a figure, while a small black node represents a vertex, sets of vertices will be represented by a slightly bigger node that is white with a black circumference (see Figure~\ref{fig:reals_case_countable_subset_infinite_kappa} for an example).
    The fact that $(X,Y)\in E^*$ will be represented by a dotted arrow from the node for $X$ (or $a$ if $X=\{a\}$) to the node for $Y$ (or $b$ if $Y=\{b\}$). Unlike a dotted arrow, an arrow with dots and dashes only indicates that there are some, but not necessarily all possible, arrows from the vertices at its tail to the vertices at its head.

    Let $\kappa^{\mathrm{e}}=\kappa\cap\OR^{\mathrm{e}}$ and $\kappa^{\mathrm{o}}=\kappa\cap\OR^{\mathrm{o}},$ and observe that $\kappa=|\kappa^{\mathrm{e}}|=|\kappa^{\mathrm{o}}|.$ Partition $K$ into the two sets $K^{\mathrm{e}}=\{(r,\xi)\in K\mid \pi_2(\xi)\in \kappa^{\mathrm{e}}\}$ and $K^{\mathrm{o}}=\{(r,\xi)\in K\mid \pi_2(\xi)\in \kappa^{\mathrm{o}}\}.$ Similar to the proof of Theorem~\ref{thm:ordinal_case}, we can fix $V_\alpha^L$ such that $|V_\alpha^L|=\kappa$ and take $N,$ and its copy $N^c,$ such that $N\supset (V_\alpha^L\cup\{V_\alpha^L\})$ is the smallest set closed under taking finite subsets. As $|N|=\kappa=|\kappa^{\mathrm{e}}|,$ we can take a bijection $f:\kappa^{\mathrm{e}}\rightarrow N,$ and use it to replace each $(r,\xi)\in K^{\mathrm{e}}$ with $(r,f(\xi)).$ Thus, we may assume that $\pi_2[K^{\mathrm{e}}]=N.$ Similarly, we may assume that $\pi_2[K^{\mathrm{o}}]=N^c.$
    
    We partition each of $K^{\mathrm{e}}$ and $K^{\mathrm{o}}$ by the equivalence $(r,x)\sim (s,y)$ iff $x=y,$ and denote the class of $(r,x)$ by $[x].$ Let $\subbar{N}=\{[x]\mid x\in N\}$ and $\subbar{N^c}=\{[x^c]\mid x^c\in N^c\}.$ Define the relation $\subbar{\in}\subset \subbar{N}\times \subbar{N}$ by $[x]\relsubbar{\in}[y] \iff x\in y,$ for all $x,y\in N.$ Similarly, define $\subbar{<_L^c}\subset \subbar{N^c}\times \subbar{N^c}$ and $<_L^c\subset N^c\times N^c$ by $[x^c]\relsubbar{<_L^c} [y^c] \iff x^c<_L^c y^c \iff x<_L y,$ for all $x^c,y^c\in N^c.$

    Consider the graph $(G,E)$ in Figure~\ref{fig:reals_case_countable_subset_infinite_kappa}, where
    \begin{enumerate}[label=G\arabic*]
        \item $(\{w_0\},[x]),([x],[x^c]),(\{w_1\},[x^c])\in E^*,$ for every $x\in N.$
        \item $([x],[y])\in E^*$ iff $[x]\relsubbar{\in}[y],$ for every $[x],[y]\in\subbar{N}.$
        \item $([x^c],[y^c])\in E^*$ iff $[x^c]\relsubbar{<_L^c}[y^c],$ for every $[x^c],[y^c]\in \subbar{N^c}.$
        \item $(\{p_n\},[x^c])\in E^*$ iff $|x|=n,$ for every $n\in\omega$ and every $[x^c]\in \subbar{N^c}.$
        \item For every $n\in \omega$ and every $[x^c]\in \subbar{N^c},$ $(\{q_n\},[x^c])\in E^*$ iff one of the followings hold:
        \begin{enumerate}[label=(\alph*)]
            \item $x\in V_\alpha^L,$ $n=\ulcorner\psi(a)\urcorner$ for some $\psi(a)\in\FORM,$ and $V_\alpha^L\satisfies \psi[x].$
            \item $x=\langle x_1,\ldots,x_m\rangle$ for some $m>1,$ $x_1,\ldots,x_m\in V_\alpha^L,$ $n=\ulcorner \psi(a_1,\ldots,a_m)\urcorner$ for some $\psi(a_1,\ldots,a_m)\in \FORM,$ and $V_\alpha^L\satisfies \psi[x_1,\ldots,x_m].$
        \end{enumerate}
        \item $(e_s,(r,\xi))\in E$ iff $r\vert_{\length(s)}=s,$ for all $s\in 2^{<\omega}$ and all $(r,\xi)\in K.$
    \end{enumerate}
    Figure~\ref{fig:dotted_lines_expanded} depicts what the dotted arrows represent by showing the subgraph of $(G,E)$ induced by $\{w_0\}\cup [x] \cup[x^c].$

        \begin{figure}[htb!]
            \centering
            \includegraphics[page=1]{images/figures_strong_rigidity.pdf}
            \caption{}
            \label{fig:reals_case_countable_subset_infinite_kappa}
        \end{figure}

        \begin{figure}[htb!]
            \centering
            \includegraphics[page=2]{images/figures_strong_rigidity.pdf}
            \caption{}
            \label{fig:dotted_lines_expanded}
        \end{figure}

    Since $G$ has no strongly rigid relation, we get a nontrivial endomorphism $h:(G,E)\rightarrow(G,E).$ Again, $h$ must fix every vertex in $H,$ and the two sets $K^{\mathrm{e}}$ and $K^{\mathrm{o}}$ are each closed under $h$ by G1.

    We want to prove that if a member of some class $[x]$ is sent by $h$ to a member of some class $[y],$ then every member of $[x]$ is sent to a member of $[y].$ We will show this for the classes in $\subbar{N},$ the argument for the classes in $\subbar{N^c}$ is done similarly. To do this, we will prove that $h((r_1,x))\in [y_1]\ \land\ h((r_2,x))\in[y_2]\implies y_1=y_2,$ for every $x,y_1,y_2\in N$ and every $(r_1,x),(r_2,x)\in[x].$ Suppose, towards a contradiction, that $h((r_1,x))\in [y_1]$ and $h((r_2,x))\in [y_2],$ but $y_1\neq y_2.$ We know that there exists some real number $s$ such that $(s,x^c)\in K^{\mathrm{o}},$ because $\pi_2[K^{\mathrm{o}}]=N^c\ni x^c.$ By G1, $((r_1,x),(s,x^c))\in E.$ Applying $h$ to this, we get $(h((r_1,x)),h((s,x^c)))\in E.$ By closure of $K^{\mathrm{e}}$ and $K^{\mathrm{o}},$ $h((r_1,x))\in K^{\mathrm{e}}$ and $h((s,x^c))\in K^{\mathrm{o}}.$ Therefore, by G1, $\pi_2(h((s,x^c)))=\pi_2(h((r_1,x)))^c=y_1^c.$ But, by symmetry, we also get $\pi_2(h((s,x^c)))=\pi_2(h((r_2,x)))^c=y_2^c.$ This is a contradiction since $y_1\neq y_2.$

    Using the above, we can define a function $j:N\cup N^c\rightarrow N\cup N^c,$ by letting $j(x)=y$ iff $h((r,x))=(s,y)$ for some $(r,x)\in [x]$ and $(s,y)\in [y].$ Now, by dropping the notations $\subbar{-}$ and $[-],$ we can argue for elementarity of $j\vert_{V_\alpha^L}:V_\alpha^L\rightarrow V_\alpha^L$ just as we argued for elementarity of $h\vert_{V_\alpha^L}$ in the proof of Theorem~\ref{thm:ordinal_case}. There will be just one detail that will be different, and that is, for the injectivity argument, we will need the fact that there are no arrows between vertices belonging to the same class. Using that, we can argue that the members of two distinct $[x^c],[y^c]$ cannot be sent by $h$ to the same class $[z^c].$

    Observe that, although we have established the elementarity of $j\vert_{V_\alpha^L},$ the argument in Theorem~\ref{thm:ordinal_case} is not sufficient to prove that $j\vert_{V_\alpha^L}$ is nontrivial. What could happen is that $j=\id,$ but the nontriviality of $h$ happens inside some class $[x],$ where $h((r_1,x))=(r_2,x)$ for some distinct $(r_1,x),(r_2,x)\in [x].$ Condition G6 is there precisely to prohibit this scenario. The argument is the same as in the proof of Claim~\ref{claim:thm_reals_case} with its G2.
\end{proof}

In the context of $\ZF,$ a cardinal $\kappa$ is said to be \emph{inaccessible} iff there is no function $f:V_\alpha\rightarrow \kappa$ with cofinal range, for any $\alpha<\kappa.$ Just as in the context of $\ZFC,$ it is easy to show that $(V_\kappa,V_{\kappa+1})\models \ZF_2$ whenever $\kappa$ is inaccessible. 
Since the critical point of any nontrivial elementary embedding $j:V_\alpha^L\rightarrow V_\alpha^L$ is an inaccessible cardinal in $L,$ the following corollary is an immediate consequence of the previous theorem.

\begin{corollary}\label{cor:no_inaccessibles_to_srr}
    If there are no inaccessible cardinals in $L,$ then every set $G\subset \mathbb{R}\times\OR$ has a strongly rigid relation.
\end{corollary}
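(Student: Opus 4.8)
The plan is to argue by contraposition: I will show that if some $G\subset\mathbb{R}\times\OR$ carries no strongly rigid relation, then $L$ has an inaccessible cardinal. So suppose $G\subset\mathbb{R}\times\OR$ has no strongly rigid relation. Then Theorem~\ref{thm:reals_case} supplies an ordinal $\alpha$ together with a nontrivial elementary embedding $j:V_\alpha^L\to V_\alpha^L$, and everything reduces to extracting an inaccessible cardinal of $L$ from $j$.

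First I would observe that $j$ has a critical point. If $j$ fixed every ordinal of $V_\alpha^L$, then induction on rank would give $j=\id$: taking $x$ of least rank among sets moved by $j$, we have $j(y)=y$ for every $y\in x$, so $x\subseteq j(x)$; and if $z\in j(x)$ then $\rank(z)<\rank(j(x))=\rank(x)$ by elementarity and the fixing of ordinals, so $j(z)=z$ by minimality, whence $z=j(z)\in j(x)$ forces $z\in x$ by elementarity, contradicting $j(x)\neq x$. Writing $\kappa=\crit(j)$, elementarity gives $j(\beta)\geq\beta$ for all ordinals $\beta$, so $j(\kappa)>\kappa$ while $j\vert_\kappa=\id$. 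Then I would recall the standard argument that $\kappa$ is inaccessible in $L$, alluded to just before the statement: $\kappa$ is a cardinal of $L$ (a bijection in $L$ between $\kappa$ and a smaller ordinal would be fixed by $j$, forcing $j(\kappa)=\kappa$); $\kappa$ is a limit cardinal of $L$ (if $\kappa=\mu^+$ in $L$, then since $\mu<\kappa$ is fixed and being a cardinal successor is expressible, $j(\kappa)=\kappa$); and $\kappa$ is regular in $L$ (a cofinal map $\gamma\to\kappa$ in $L$ with $\gamma<\kappa$ would be fixed, again forcing $j(\kappa)=\kappa$). Since $\GCH$ holds in $L$, a regular limit cardinal of $L$ is inaccessible there. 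This contradicts the hypothesis that there are no inaccessible cardinals in $L$, so every $G\subset\mathbb{R}\times\OR$ must carry a strongly rigid relation.

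There is no substantial obstacle here: the corollary really is the contrapositive of Theorem~\ref{thm:reals_case} combined with a classical fact about critical points. The only point that needs a line of care — and the reason one quotes the inaccessibility fact rather than reproving it inline — is that the witnesses used above (the bijection, the cofinal map, and the relevant instances of definability) must actually be available inside $V_\alpha^L$, which requires knowing that $\alpha$ is large enough relative to $\crit(j)$. With that granted, the proof is complete.
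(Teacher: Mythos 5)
Your proposal is correct and follows essentially the same route as the paper: the paper derives the corollary immediately from Theorem~\ref{thm:reals_case} by contraposition together with the (standard) fact, which it states without proof just before the corollary, that the critical point of a nontrivial elementary embedding $j:V_\alpha^L\rightarrow V_\alpha^L$ is inaccessible in $L$. You simply spell out that standard fact, including the appropriate caveat that $\alpha$ must be large enough to contain the relevant witnesses.
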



\section{A Model for $\neg \AC+\SRR$}

We are now ready to build a model for $\ZF+\neg\AC+\SRR,$ thus establishing that $\SRR$ does not imply $\AC.$

\begin{theorem}\label{thm:neg_ac_plus_srr}
    If $\ZF$ is consistent, then so is $\ZF+\neg\AC+\SRR.$
\end{theorem}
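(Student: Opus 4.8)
The plan is to realize $\ZF + \neg\AC + \SRR$ in a Cohen-style symmetric extension of a carefully chosen model of $\ZFC$, using Corollary~\ref{cor:no_inaccessibles_to_srr} to supply strongly rigid relations for the sets that matter.

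First I would fix a convenient ground model. If $\ZF$ is consistent then so is $\ZFC$ (pass to $L$), and hence so is $\ZFC + V = L$ together with ``there are no inaccessible cardinals'': given $M \models \ZFC$, if $L^M$ has an inaccessible let $\kappa$ be its least one and pass to $(L_\kappa)^M$, which models $\ZFC + V = L$ and has no inaccessible cardinal, while otherwise $L^M$ already works. So fix a ground model $W \models \ZFC + V=L$ in which there are no inaccessible cardinals; in particular $W = L^W$ admits no nontrivial elementary embedding $V_\alpha^W \to V_\alpha^W$, since the critical point of one would be inaccessible in $L = W$.

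Next I would build over $W$ the symmetric model $N$ used by Hamkins and Palumbo in \cite{hamkins}: force with the finite partial functions $\omega \times \omega \to 2$ to add Cohen reals $\langle a_n : n \in \omega\rangle$ and pass to the symmetric submodel for the full permutation group of $\omega$ (permuting the $a_n$) with finite supports. Then $N \models \ZF$, and $\AC$ fails in $N$ because the set $A = \{a_n : n\in\omega\}$ has no well-ordering there. Since neither forcing nor passing to a symmetric submodel changes $L$, we have $L^N = L^W = W$, so $N$ likewise satisfies ``there is no inaccessible cardinal in $L$''. Corollary~\ref{cor:no_inaccessibles_to_srr}, applied inside $N$, then gives that every $G \subseteq \mathbb{R}\times\OR$ (in the sense of $N$) has a strongly rigid relation in $N$; and since strong rigidity transfers along bijections (pull the relation back), so does every set of $N$ that is, in $N$, in bijection with a subset of $\mathbb{R}\times\OR$. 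Every set already belonging to $W$ is of this kind, being well-orderable in $N$ and hence injecting into $\OR$.

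So the theorem reduces to the claim that in $N$ every set is, in $N$, in bijection with a subset of $\mathbb{R}\times\OR$, and this is the step I expect to be the main obstacle; it is where the ``ideas from \cite{hamkins}'' are needed. The idea is that every $X \in N$ has a hereditarily symmetric name with a finite support $E\subseteq\omega$, and, using the canonical well-ordering of $W = L$, one analyses such names — together with the fact that sets of ordinals in $N$ with support $E$ already lie in the (choiceful) submodel $W[\langle a_n : n\in E\rangle]$ — to inject $X$ into $\mathbb{R}^{<\omega}\times\OR$: the finite real sequence records the supporting Cohen reals and the ordinal indexes a $W$-enumeration of the names involved; then $\mathbb{R}^{<\omega}\times\OR$ injects into $\mathbb{R}\times\OR$ by coding a finite sequence of reals as one real. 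The delicate points are that this injection must actually lie in $N$ (it cannot mention the generic filter) and must be well-defined and injective, which requires controlling supports — for instance knowing the relevant sets have least supports. This is essentially the bookkeeping Hamkins and Palumbo carry out to verify $\RR$ in the same model, now required in the sharper form ``in bijection with a subset of $\mathbb{R}\times\OR$'' so that Corollary~\ref{cor:no_inaccessibles_to_srr} can be applied. Granting it, every set of $N$ carries a strongly rigid relation, so $N \models \ZF + \neg\AC + \SRR$ and the theorem follows.
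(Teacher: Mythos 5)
Your proposal is correct and follows essentially the same route as the paper: pass to a ground model with no inaccessibles in $L$, build the Hamkins--Palumbo symmetric Cohen model, note $L$ is unchanged, and apply Corollary~\ref{cor:no_inaccessibles_to_srr} after showing every set is in bijection with a subset of $\mathbb{R}\times\OR$. The step you flag as the main obstacle is exactly the one the paper outsources to \cite[Lemma~5.25]{jech_ac_book} (every set injects into $A^{<\omega}\times\OR$ for a set of reals $A$), and your sketch of it is the standard argument.
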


\begin{proof}
    Work inside $V.$ We can assume that there are no inaccessible cardinals in $L$ (if there are any, then simply work inside $V_\kappa^L$ for $\kappa$ the least inaccessible in $L).$ Given this assumption and Corollary~\ref{cor:no_inaccessibles_to_srr}, the same model that worked for Hamkins and Palumbo \cite{hamkins} for the consistency of $\ZF+\neg\AC+\RR$ will work for us too. The model in question is the symmetric Cohen model $M,$ built as follows: Let $P=\operatorname{Add}(\omega,\omega)$ be the usual forcing notion that adds countably many Cohen reals. Thus, $P=\{p:\omega\times\omega \rightarrow\{0,1\}\mid p\textrm{ is finite}\}$ and $p\leq q\iff p\supset q,$ for all $p,q\in P.$ Every permutation $\pi:\omega\rightarrow \omega$ induces an automorphism $\bar\pi:P\rightarrow P$ by letting
    $$
    \bar\pi(p)=\{(\pi(n),m)\mid (n,m)\in p\}.
    $$
    This automorphism, in turn, induces an automorphism $\hat\pi:V^P\rightarrow V^P$ of the class of $P$-names by the recursive definition
    $$
    \hat\pi(\tau)=\{(\hat\pi(\sigma),\bar\pi(p))\mid(\sigma,p)\in\tau\}.
    $$
    It is easy to prove, using induction on complexity of formula, that $p\forces_P \psi[\tau] \iff \bar\pi(p)\forces_P \psi[\hat\pi(\tau)].$
    A $P$-name $\tau$ is said to be \emph{symmetric} iff there exists a finite set $e\subset \omega$ such that whenever $\pi:\omega\rightarrow \omega$ is a permutation that fixes every member of $e,$ then $\hat\pi(\tau)=\tau.$ The class of hereditarily symmetric names is denoted by $\mathrm{HS}.$ The symmetric Cohen model $M$ is defined in an extension $V[G]$ to be the class of the interpretations of hereditarily symmetric names, that is, $M=\{i_G(\tau)\mid \tau \in \mathrm{HS}\}.$

    In \cite[Lemma~5.25]{jech_ac_book}, it is established in $M$ that there is a set of reals $A$ such that every set can be injected into $A^{<\omega}\times\OR.$ Working in $M,$ as $\mathbb{R}$ and $\mathbb{R}^{<\omega}$ are in bijection, this means that every set is in bijection with a subset of $\mathbb{R}\times \OR.$ But, since $L^M=L^V,$ the assumption that there are no inaccessible cardinals in $L^M$ and Corollary~\ref{cor:no_inaccessibles_to_srr} imply that every subset of $\mathbb{R}\times \OR,$ and hence every set, has a strongly rigid relation.
\end{proof}


\section{Proto Berkeley Cardinals} 

\begin{theorem}\label{thm:proto_berkeley_cardinals}
    A cardinal $\delta$ is proto Berkeley iff for any graph $(G,E)$ such that $\aleph(G)>\delta$ and any injection $f:\delta\rightarrow G,$ there is an endomorphism $h:(G,E)\rightarrow(G,E)$ such that $h\vert_{\range{f}}\neq \id.$
\end{theorem}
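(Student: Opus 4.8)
I would prove the forward and backward directions by quite different routes.

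\emph{Forward direction.} Assume $\delta$ is proto Berkeley and let $(G,E)$ and an injection $f\colon\delta\to G$ be given. Put $p=(G,E,f,\delta)$ and $M=\tc{\{p\}}$, a transitive set with $\delta\in M$. By Regularity $p$ is the unique $\in$-maximal element of $M$: no element of $M$ contains $p$ (else $p\in\tc{p}$), while every element of $\tc{p}$ lies in some element of $M$. Hence $p$ is definable in $(M,\in)$, so the nontrivial elementary $j\colon M\to M$ with $\kappa:=\crit(j)<\delta$ provided by proto Berkeleyness must satisfy $j(p)=p$; since the transitive $M$ decodes Kuratowski tuples correctly, this gives $j(G)=G$, $j(E)=E$, $j(f)=f$, $j(\delta)=\delta$. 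Consequently $h:=j\vert_G$ maps $G$ into $G$ and, as $j(E)=E$, is an endomorphism of $(G,E)$; moreover $h(f(\kappa))=j(f(\kappa))=j(f)(j(\kappa))=f(j(\kappa))\neq f(\kappa)$ because $f$ is injective and $j(\kappa)>\kappa$, and $f(\kappa)\in\range f$ since $\kappa<\delta$. So $h\vert_{\range f}\neq\id$.

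\emph{Backward direction.} Assume the stated condition and fix an arbitrary transitive $M$ with $\delta\in M$; I must build a nontrivial elementary $j\colon M\to M$ with critical point below $\delta$. Imitating the proof of Theorem~\ref{thm:ordinal_case}, let $N\supseteq M\cup\{M\}$ be the smallest transitive set closed under finite subsets, ordered pairs and finite sequences, let $N^c$ be a disjoint copy, and build $(G,E)$ exactly as there with $M$ in the role of $V_\alpha^L$: the rigidifying gadgets $A$ and $B$; the arrows $w_0\to N$, $w_1\to N^c$ and $x\to x^c$; the membership relation on $N$; the cardinality gadgets $p_n\to x^c$; and the satisfaction gadgets $q_n\to x^c$ coding truth in $M$. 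Since in $\ZF$ the set $M$ need not be well-orderable, I place on $N^c$ a copy of $\in$ rather than a well-order; that relation is used only to forbid cycles outside the gadget $A$, which a copy of $\in$ still does. With $f(\xi)=\xi^c$ for $\xi<\delta$, $f$ is an injection $\delta\to G$ (as $\delta\subseteq M\subseteq N$), so $\aleph(G)>\delta$, and the hypothesis supplies an endomorphism $h$ of $(G,E)$ with $h\vert_{\range f}\neq\id$.

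It remains to check that $j:=h\vert_M$ is a nontrivial elementary self-embedding of $M$ with critical point below $\delta$, and this is the heart of the matter: one re-runs the verification of Theorem~\ref{thm:ordinal_case} with $M$ in place of $V_\alpha^L$. The only place that argument used the well-orderedness of the relation on $N^c$ (rather than mere well-foundedness, which a copy of $\in$ supplies) was to make $h$ injective on $N$, which was in turn used to show $h$ fixes $V_\alpha^L$ and respects finite sets, ordered pairs and sequences. Here those facts are recovered without injectivity: $h$ fixes every natural number (definability gadget), so $\omega\subseteq h(M)$, making $h(M)$ infinite, and since $M$ is the unique infinite member of $N\setminus M$ while $h(M)\in M$ would give an infinite $\in$-descent, $h(M)=M$; and the identities $h(\langle x_1,\dots,x_k\rangle)=\langle h(x_1),\dots,h(x_k)\rangle$ follow from the cardinality gadgets once one notes, via the satisfaction gadget applied to $\neg(a\in\omega)$, that $h$ maps $M\setminus\omega$ into $M\setminus\omega$, so the fixed first coordinates $0,\dots,k-1$ are never collided with. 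The satisfaction gadgets then give $M\satisfies\psi[x_1,\dots,x_k]\iff M\satisfies\psi[h(x_1),\dots,h(x_k)]$ for every $\psi\in\FORM$ and $x_1,\dots,x_k\in M$. Finally, $h\vert_{\range f}\neq\id$ means some $\xi^c$ with $\xi<\delta$ is moved, i.e.\ $j(\xi)=h(\xi)\neq\xi$, so $j$ is nontrivial with $\crit(j)\le\xi<\delta$; as $M$ was arbitrary, $\delta$ is proto Berkeley. The main obstacle is precisely this transport of the delicate argument of Theorem~\ref{thm:ordinal_case} from the canonically well-ordered $V_\alpha^L$ to an arbitrary transitive set, i.e.\ replacing each use of the well-order (equivalently, of injectivity of the endomorphism) by the pointwise fixing of $\omega$ and the uniqueness of $M$ among the infinite members of $N\setminus M$.
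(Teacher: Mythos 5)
Your proof is correct, and in the backward direction it takes a genuinely different route from the paper's. The forward directions are essentially the same trick---make $(G,E,f)$ definable inside a transitive set containing $\delta$ so that the proto-Berkeley embedding must fix it---differing only in packaging: the paper uses $M=V_\alpha\cup\{V_\alpha,\{\langle G,E,f\rangle,V_\alpha\}\}$ and defines the pair as the unique two-element set one of whose members contains every other set, while you use $\tc{\{p\}}$ and define $p$ as the unique $\in$-maximal element. The real divergence is in how each of you copes with the loss of a well-order on $N$. The paper keeps a \emph{total} relation on $N^c$ (namely $\neq$) so that the injectivity argument of Theorem~\ref{thm:ordinal_case} survives verbatim; the price is that $N^c$ is then full of $2$-cycles, the ``unique $3$-cycle'' argument for fixing the gadget $A$ collapses, and $A$ must be replaced by the ordinal $\aleph(N\cup N^c)+1$ with its order, together with a Hartogs-number pigeonhole argument showing its top point is fixed. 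You instead put a mere copy of $\in$ on $N^c$, which keeps the graph acyclic outside $A$ (so the original gadgets $A$ and $B$ survive untouched) but sacrifices injectivity of $h$ on $N$; you then recover exactly the consequences of injectivity that the argument actually uses---that $h(M)$ is infinite (via $\omega\subseteq h(M)$ rather than via injectivity) and that $h$ preserves Kuratowski pairs and tuples built over $M$ (via the cardinality gadgets plus the observation that the natural-number coordinates are fixed pointwise and can never collide with images of elements of $M\setminus\omega$). Your remark that the relation on $N^c$ ``is used only to forbid cycles'' misdescribes Theorem~\ref{thm:ordinal_case}, where its totality is what drives injectivity, but your next paragraph corrects for this, and the step $h(\{i,x\})=\{i,h(x)\}$---which silently requires $h(x)\neq i$---is exactly where your bookkeeping is load-bearing; the satisfaction gadget applied to ``$a$ is not a finite ordinal'' does close it. Both resolutions are sound: the paper's buys full injectivity of $h$ on $N$ up front at the cost of an extra ordinal gadget, while yours avoids that gadget at the cost of more delicate case analysis in the finite-set identities.
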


\begin{proof}
    From left to right is easy, so we do that first. Fix any $(G,E)$ and $f:\delta\rightarrow G$ as in the statement of the theorem. Let $V_\alpha$ be such that $\langle G,E,f\rangle \in V_\alpha,$ and let $M=V_\alpha\cup\{V_\alpha,\{\langle G,E,f\rangle,V_\alpha\}\}.$ Clearly, $M$ is a transitive set and $\delta\in M.$ By proto Berkeleyness, we can find an elementary embedding $j:M\rightarrow M$ that has critical point $\kappa$ strictly below $\delta.$ The pair $\{\langle G,E,f\rangle,V_\alpha\}$ is definable in $M$ as the unique set with exactly two members, one of which is a set that has every other set as a member. It easily follows from this that each of $G,E,f$ is definable in $M,$ and we deduce that $j$ must fix each one of them. This shows that $j\vert_G:(G,E)\rightarrow(G,E)$ is an endomorphism. Also, $j(f(\kappa))=j(f)(j(\kappa))=f(j(\kappa))\neq f(\kappa),$ so that $j\vert_G$ is not the identity on the range of $f.$

    For the other direction, fix a transitive set $M$ such that $\delta\in M.$ Our aim is to show that there is a nontrivial elementary embedding $j:M\rightarrow M$ with critical point strictly below $\delta.$ The idea of the proof is similar to that of the proof of Theorem~\ref{thm:ordinal_case}. Thus, let $N\supset M\cup\{M\}$ be the smallest transitive set closed under taking finite subsets, and let $N^c$ be a copy of it.
    
    (We need a countable set disjoint from $N\cup N^c$ that will be fixed by any endomorphism to play the role of the set $A\cup B$ of Theorem~\ref{thm:ordinal_case}, and we also need a relation on $N^c$ which can be used to argue for injectivity of any endomorphism on $N.$ For the injectivity argument, we need to have an arrow between any two distinct $x^c,y^c\in N^c.$ For this, we made use of $<_L$ in Theorem~\ref{thm:ordinal_case}, but in our current situation, there is no guarantee that $N$ is wellorderable. An obvious candidate for our case is $\neq,$ but this causes another problem: In the proof of Theorem~\ref{thm:ordinal_case}, when we argued that $A$ was fixed, we needed the fact that there were no 3-cycles in the graph other than the one in $A.$ However, if we use $\neq,$ we will be adding cycles of all sizes to the graph, and the argument for $A$ being fixed fails. One remedy to this is to replace $A$ with another solution. That is where we will use the notion of Hartog's number.)
    
    Let $A=\alpha+1,$ where $\alpha=\aleph(N\cup N^c).$ Consider the graph $(G,E)$ in Figure~\ref{fig:berkeley_case}, where
    \begin{enumerate}[label=G\arabic*]
        \item $(w_0,x),(x,x^c),(w_1,x^c)\in E,$ for every $x\in N.$
        \item $(x,y)\in E$ iff $x\in y,$ for every $x,y\in N.$
        \item $(x^c,y^c)\in E$ iff $x^c\neq y^c,$ for every $x^c,y^c\in N^c.$
        \item $(p_n,x^c)\in E$ iff $|x|=n,$ for every $n\in\omega$ and every $x^c\in N^c.$
        \item For every $n\in \omega$ and every $x^c\in N^c,$ $(q_n,x^c)\in E$ iff one of the followings hold:
        \begin{enumerate}[label=(\alph*)]
            \item $x\in M,$ $n=\ulcorner\psi(a)\urcorner$ for some $\psi(a)\in\FORM,$ and $M\satisfies \psi[x].$
            \item $x=\langle x_1,\ldots,x_m\rangle$ for some $m>1,$ $x_1,\ldots,x_m\in M,$ $n=\ulcorner \psi(a_1,\ldots,a_m)\urcorner$ for some $\psi(a_1,\ldots,a_m)\in \FORM,$ and $M\satisfies \psi[x_1,\ldots,x_m].$
        \end{enumerate}
        \item $(\xi_1,\xi_2)\in E$ iff $\xi_1<\xi_2,$ for every $\xi_1,\xi_2\in \alpha+1.$
    \end{enumerate}
    
    \begin{figure}[htb!]
        \centering
        \includegraphics[page=4]{images/figures_strong_rigidity}
        \caption{}
        \label{fig:berkeley_case}
    \end{figure}

    We remark that although $A$ and $N$ have some common ordinals, we really mean that they are disjoint and merely use their real names for convenience. Normally one would use a copy of, say, the set $A,$ just as we did with $N.$ We will aim to make it clear from the context which copy of a given ordinal we are talking about.

    Since $\aleph(G)>\delta,$ we can fix an endomorphism $h:(G,E)\rightarrow (G,E)$ that is nontrivial on $\delta\subset N,$ if we take as $f$ the injection $\id:\delta\rightarrow \delta\subset N.$ That will take care of the critical point being strictly below $\delta\in M,$ and we only need to show that $h\vert_M:M\rightarrow M$ is an elementary embedding. Although we cannot argue that every member of $A$ is fixed by $h,$ we can, nonetheless, argue that $\alpha\in A$ is fixed by $h.$ Using that, we can proceed as in the proof of Theorem~\ref{thm:ordinal_case} and show that $h\vert_M:M\rightarrow M$ is indeed an elementary embedding, which will finish the proof.
    
    So, let us show that $\alpha$ is fixed by $h.$ Henceforth, every ordinal we name belongs to $A,$ and not $N.$ First, we argue that $h(\beta)\in A,$ for all $\beta<\alpha.$ Working towards a contradiction, fix $\beta<\alpha$ such that $h(\beta)\notin A.$ If $h(\gamma)\in A$ for some $\gamma>\beta,$ then the arrow $(h(\beta),h(\gamma))\in E$ will be flowing from outside of $A$ into $A.$ But, no such arrows exist in our graph, so we must conclude that $h(\gamma)\notin A,$ for all $\gamma>\beta.$ We have established that $h\vert_{[\beta,\alpha]}$ has $G-A$ as codomain. By choice of $\alpha,$ $h\vert_{[\beta,\alpha]}$ cannot be an injection. Therefore, there are $\beta\leq\xi_1<\xi_2<\alpha$ such that $h(\xi_1)=h(\xi_2).$ This is a contradiction, since $(\xi_1,\xi_2)\in E$ implies $(h(\xi_1),h(\xi_2))\in E,$ which is a loop, and our graph is free of loops.

    Fix any $\beta_1<\beta_2<\alpha.$ By the above and endomorphism of $h,$ $h(\beta_1)<h(\beta_2)\leq \alpha.$ Since $(h(\beta_1),h(\alpha))\in E$ and there are no arrows flowing from $A-\{\alpha\}$ to outside of $A,$ we must have $h(\alpha)\in A$ too. Hence, either $h(\alpha)=\alpha$ or $h(\alpha)<\alpha.$ Suppose, again towards a contradiction, that $h(\alpha)=\beta<\alpha.$ Taking any $\gamma<\alpha,$ we see that $h(\gamma)<h(\alpha)=\beta.$ Hence, $h\vert_{[0,\alpha)}$ has $[0,\beta)$ as codomain. Since $\alpha$ is clearly a cardinal and $\beta<\alpha,$ $h\vert_{[0,\alpha)}$ cannot be injective. Again, this implies the existence of a loop, which is a contradiction. The only option we are left with is $h(\alpha)=\alpha.$
\end{proof}


There is another way to deal with the problem of the injectivity argument in the proof above that does not involve creating cycles. For this, we need two copies of $N,$ call them $N^{c_1}$ and $N^{c_2}.$ Define $<_\rk$ by $x<_\rk y \iff \rank(x)<\rank(y),$ for all sets $x,y.$ Define also $\notin_\rk$ by $x\notin_\rk y \iff x\notin y \ \land\ x<_\rk y,$ for all sets $x,y.$ Let $<_\rk^{c_2}\subset N^{c_2}\times N^{c_2}$ be defined by $x^{c_2}<_\rk^{c_2}y^{c_2} \iff x<_\rk y,$ for all $x^{c_2},y^{c_2}\in N^{c_2}.$ Similarly, let $\notin_\rk^{c_1}\subset N^{c_1}\times N^{c_1}$ be defined by $x^{c_1}\notin_\rk^{c_1} y^{c_1}\iff x\notin_\rk y,$ for all $x^{c_1},y^{c_1}\in N^{c_1}.$

Consider the graph $(G,E)$ in Figure~\ref{fig:another_solution_for_injectivity}, where
\begin{enumerate}[label=G\arabic*]
    \item $(x,x^{c_1}),(x^{c_1},x^{c_2})\in E,$ for every $x\in N$
    \item $(x,y)\in E$ iff $x\in y,$ for every $x,y\in N$
    \item $(x^{c_1},y^{c_1})\in E$ iff $x^{c_1} \notin_\rk^{c_1} y^{c_1},$ for every $x^{c_1},y^{c_1}\in N^{c_1}$
    \item $(x^{c_2},y^{c_2})\in E$ iff $x^{c_2}<_\rk^{c_2} y^{c_2},$ for every $x^{c_2},y^{c_2}\in N^{c_2}.$
\end{enumerate}

\begin{figure}[htb!]
    \centering
    \includegraphics[page=8]{images/figures_strong_rigidity}
    \caption{}
    \label{fig:another_solution_for_injectivity}
\end{figure}

Clearly, this graph does not contain any cycles by wellfoundedness of $\in$ and $<_\rk.$ We can use this graph as part of a bigger graph which ensures that each of the copies of $N$ are closed under any endomorphism $h,$ and which also ensures elementarity of the appropriate restrictions of $h.$ Here, we will only show:

\begin{proposition}
    If $(G,E)$ is the graph in Figure~\ref{fig:another_solution_for_injectivity} and $h:(G,E)\rightarrow (G,E)$ is any endomorphism, under which each of the copies of $N$ are closed, then $h$ is injective on $N$ and behaves the same across all copies of $N.$
\end{proposition}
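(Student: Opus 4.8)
The plan is to prove the two assertions in turn: first that $h$ commutes with the two ``copy'' maps $x\mapsto x^{c_1}$ and $x\mapsto x^{c_2}$ (this is the ``behaves the same across all copies'' clause), and then, using this, that $h$ is injective on $N$, via a case split on ranks. The bookkeeping is essentially the same as in the proof of Theorem~\ref{thm:ordinal_case}: inspect which arrows of $E$ can possibly connect the various pieces of $G$, and push the relevant ones through $h$.

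For the commutation, I would note that by the very definition of $E$ via G1--G4 the only arrows with tail in $N$ and head in $N^{c_1}$ are the arrows $(y,y^{c_1})$ from G1, and the only arrows from $N^{c_1}$ to $N^{c_2}$ are the arrows $(y^{c_1},y^{c_2})$ from G1. Hence, for $x\in N$, applying $h$ to $(x,x^{c_1})\in E$ and using that $N$ and $N^{c_1}$ are closed under $h$ forces $h(x^{c_1})=h(x)^{c_1}$; applying $h$ to $(x^{c_1},x^{c_2})\in E$ and feeding in the identity just obtained forces $h(x^{c_2})=h(x)^{c_2}$. So $h$ acts the same way on all three copies. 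This part is routine.

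For injectivity, fix distinct $x,y\in N$. If $\rank(x)\neq\rank(y)$, say $\rank(x)<\rank(y)$, then $(x^{c_2},y^{c_2})\in E$ by G4; pushing this through $h$ and using the commutation gives $(h(x)^{c_2},h(y)^{c_2})\in E$, so $\rank(h(x))<\rank(h(y))$ and in particular $h(x)\neq h(y)$. The real case is $\rank(x)=\rank(y)$: here neither $<_\rk$ (on $N^{c_2}$) nor $\notin_\rk$ (on $N^{c_1}$) puts an arrow between the two relevant vertices, so neither comparison relation directly ``sees'' that $x\neq y$. The idea is to descend one level into the $\in$-structure of $N$: since $x$ and $y$ are distinct of equal rank, pick $z$ in their symmetric difference, say $z\in x\setminus y$. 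Then $\rank(z)<\rank(x)=\rank(y)$ and $z\notin y$, so $z\notin_\rk y$, hence $(z^{c_1},y^{c_1})\in E$ by G3; pushing this through $h$ (and commutation) gives $h(z)\notin_\rk h(y)$, so $h(z)\notin h(y)$. On the other hand $z\in x$ gives $(z,x)\in E$ by G2, so $(h(z),h(x))\in E$, i.e.\ $h(z)\in h(x)$. Were $h(x)=h(y)$, these two facts about $h(z)$ would contradict each other; hence $h(x)\neq h(y)$.

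I expect the only nonroutine point to be exactly that equal-rank case: one has to recognize that $<_\rk$ alone is too coarse, being merely a preorder that is constant on each rank-level, and that the fix is to play the membership relation on $N$ against the rank-restricted non-membership relation $\notin_\rk$ on $N^{c_1}$, using a symmetric-difference witness of strictly smaller rank so that $\notin_\rk$ becomes applicable. Everything else is the ``which arrows can exist between the pieces, then apply $h$'' argument already in play earlier in the paper.
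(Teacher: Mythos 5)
Your proposal is correct and follows essentially the same route as the paper's proof: establish $h(x^{c_1})=h(x)^{c_1}$ and $h(x^{c_2})=h(x)^{c_2}$ from the G1 arrows, then split injectivity into the unequal-rank case (handled by $<_\rk$ on $N^{c_2}$) and the equal-rank case (handled by a symmetric-difference witness $a\in x\setminus y$, which satisfies $a\notin_\rk y$, played against $a\in x$ via G2 and G3). The paper's argument is word-for-word the same idea, including the observation that $<_\rk$ alone cannot separate equal-rank sets.
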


\begin{proof}
    Fix such an endomorphism $h.$ By G1, 
    \begin{enumerate}[label=C\arabic*]
        \item $h(x^{c_1})=h(x)^{c_1},$ for every $x\in N$
        \item $h(x^{c_2})=h(x)^{c_2},$ for every $x\in N.$
    \end{enumerate}
    This shows that $h$ behaves the same across all copies of $N.$ Now, for injectivity,
    fix any $x,y\in N$ such that $x\neq y.$ If $x<_\rk y$ or $y<_\rk x,$ then $h(x)<_\rk h(y)$ or $h(y)<_\rk h(x)$ by G4, applying $h,$ and C2. Thus, in such cases, $h(x)\neq h(y).$ Suppose now that $\rank(x)=\rank(y).$ Fix $a\in N$ such that, without loss of generality, $a\in x$ but $a\notin y.$ $a\in x$ implies $\rank(a)<\rank(x)=\rank(y).$ That, along with $a\notin y,$ imply that $a\notin_\rk y.$ By G3, applying $h,$ and C1, we get $h(a)\notin_\rk h(y).$ In particular, $h(a)\notin h(y).$ Meanwhile, we also have $h(a)\in h(x)$ by G2 and applying $h.$ That is, $h(a)\in h(x)$ but $h(a)\notin h(y),$ which is to say $h(x)\neq h(y).$ As $x,y$ were arbitrary, we deduce that $h$ is injective on $N.$
\end{proof}




\printbibliography

@article{lcbc,
    author={Joan Bagaria AND Peter Koellner AND W. Hugh Woodin},
    title={Large cardinals beyond choice},
    journal={The Bulletin of Symbolic Logic},
    volume={25},
    number={3},
    year={2019},
    pages={283-318},
    publisher={Cambridge University Press}
}

@book{jech_ac_book,
    author = {Thomas Jech},
    title = {The Axiom of Choice},
    publisher = {North-Holland Publishing Company},
    year = {1973}
}

@misc{hamkins,
      title={The rigid relation principle, a new weak choice principle}, 
      author={Joel David Hamkins and Justin Palumbo},
      year={2011},
      eprint={1106.4635},
      archivePrefix={arXiv},
      primaryClass={math.LO},
}

@article{vopenka,
author = {Petr Vop\v{e}nka and Ale\v{s} Pultr and Zden\v{e}k Hedrl\'{i}n},
journal = {Commentationes Mathematicae Universitatis Carolinae},
keywords = {set theory},
number = {2},
pages = {149-155},
publisher = {Charles University in Prague, Faculty of Mathematics and Physics},
title = {A rigid relation exists on any set},
volume = {6},
year = {1965},
}

\end{document}